\newtheorem{thm}{Theorem}[section]
\newtheorem{lem}[thm]{Lemma}
\theoremstyle{remark}
\newtheorem{rem}[thm]{Remark}
\numberwithin{equation}{section}
\newcommand{\al}{\alpha}
\def\vz{\varepsilon}
\def\oz{\omega}
\def\az{\alpha}
\def\Gz{\Gamma}
\def\({\Bigl(}
\def \){ \Bigr)}
 \def\k{{\kappa}}
 \def\RR{{\mathbb R}}
\def\x{{\bf x}}
\def\k{{\bf k}}
\def\kk{{\bf k}}
\def\y{{\bf y}}
\begin{document}
\def\RR{\mathbb{R}}
\def\Exp{\text{Exp}}
\def\FF{\mathcal{F}_\al}

\title[] {Preasymptotics and asymptotics of approximation numbers of anisotropic Sobolev embeddings}

\author[]{Jia Chen, Heping Wang} \address{School of Mathematical Sciences, Capital Normal
University, Beijing 100048,
 China.}
\email{ jiachencd@163.com;\ \  \ wanghp@cnu.edu.cn.}

\keywords{ Sharp constants; Asymptotics; Preasymptotics;
Approximation numbers; Tractability, Anisotropic Sobolev spaces.}

\thanks{
 Supported by the
National Natural Science Foundation of China (Project no.
11271263),
 the  Beijing Natural Science Foundation (1132001) and BCMIIS
 }

\begin{abstract} In this paper, we  obtain the preasymptotic and asymptotic
behavior and  strong equivalences  of the approximation numbers of
the embeddings from the anisotropic Sobolev spaces $W_2^{\bf
R}(\Bbb T^d)$  to $L_2(\Bbb T^d)$.  We also get the preasymptotic
 behavior   of the approximation numbers
of the embeddings from the limit spaces $W_2^{\infty}(\Bbb T^d)$
of the anisotropic Sobolev spaces $W_2^{\bf R}(\Bbb T^d)$  to
$L_2(\Bbb T^d)$. We show that both the above  embedding problems
are intractable and do not suffer from the curse of
dimensionality.

\end{abstract}

\maketitle
\input amssym.def

\section{Introduction}

Let $X,Y$ be two Banach spaces. For a bounded linear operator
$T:X\rightarrow Y$, the approximation numbers of it are defined as
\begin{align*}\label{1.1}
a_n(T:X\rightarrow Y):&=\inf_{rank A< n}\sup_{\|x|X\|\leq 1}\|Tx-Ax|Y\|\notag\\
&=\inf_{rank A< n}\|T-A:X\rightarrow Y\|,\ \ n\in \Bbb N_+.
\end{align*}
They describe the best approximation of $T$ by finite rank
operators.

Recently, the papers \cite{KSU1, KSU2} considered  sharp constants
of the approximation numbers and tractability of the embeddings
from the Sobolev spaces of isotropic  and dominating mixed
(fractional) smoothness for various equivalent norms including the
classical one on the $d$-dimensional torus $\Bbb T^d$ to the $L_2$
space. Specially, the authors of \cite{KSU1, KMU} and \cite{KSU2}
obtained the asymptotic and preasymptotic behavior  of the
approximation numbers
 of the isotropic Sobolev embeddings and the mixed order Sobolev embeddings. We note that there are
another Sobolev spaces--anisotropic Sobolev spaces, which may be
viewed as generalization of the isotropic Sobolev spaces. In this
paper, we investigate the preasymptotic and asymptotic behavior of
the approximation numbers of the anisotropic Sobolev embeddings
\begin{equation}\label{1.1}I_d: \,W_2^{\bf R}(\Bbb
T^d)\longrightarrow L_2(\Bbb T^d), \ \ {\bf
R}=(R_1,R_2,\dots,R_d)\in \Bbb R_+^d,\ \end{equation} and the
limit space embeddings of the anisotropic Sobolev spaces
\begin{equation}\label{1.1-11}I_d\:: W_2^{\infty}(\Bbb
T^d)\rightarrow L_2(\Bbb T^d), \end{equation} where $I_d$ is the
identity (embedding) operator, and $W_2^{\bf R}(\Bbb T^d)$ and
$W_2^{\infty}(\Bbb T^d)$ are the anisotropic Sobolev spaces and
their limit spaces whose definitions
  will be
given in Sections 2.2 and 2.3.

In recent years, there has been an increasing interest in
multivariate computational problems which are defined on classes
of functions depending on $d$ variables, since many problems,
e.g., in finance or quantum chemistry, are modeled in associated
function spaces on high-dimensional domains. So far, many authors
have contributed to the subject, see for instance the monographs
by Temlyakov \cite{T} and the references therein. In \cite[Chapter
2, Theorems 4.1, 4.2]{T}, the following two-sided estimate can be
found:
\begin{equation}\label{1.2}c({\bf R},d)n^{-g({\bf R})}\leq a_n(I_d
: W_2^{\bf R}(\Bbb T^d)\rightarrow L_2(\Bbb T^d))\leq C({\bf
R},d)n^{-g({\bf R})},\: n\in \Bbb N_+, \end{equation} where
$g({\bf R})=\frac{1}{1/R_1+...+1/R_d}$, and the constants $c({\bf
R},d)$ and $C({\bf R},d)$,  only depending on $d$ and ${\bf R}$,
were not explicitly determined. When $R_1=R_2=\dots=R_d=s>0$,
$W_2^{\bf R}(\Bbb T^d)$ recedes to the natural isotropic Sobolev
space $H^{s,2s}(\Bbb T^d)$, $g({\bf R})=s/d$, where  the
definition of the Sobolev space $H^{s,r}(\Bbb T^d)\ (s>0,\ 0<r \le
\infty)$ will be given in Section 2.1. When $\min\{R_1,\dots,
R_d\}\to+\infty$, $W_2^{\bf R}(\Bbb T^d)$ tends to $W_2^{\infty
}(\Bbb T^d)$ in the sense of the set limit.

 Our main focus is to clarify, for
arbitrary but fixed ${\bf R}$, the dependence of these constants
on $d$. Surprisingly, for sufficiently large $n$, say $n>3^d$, it
turns out that the optimal constants decay polynomially in $d$,
i.e.,
$$c({\bf R},d)\asymp C({\bf R},d)\asymp d^{-1/2},$$where
equivalent constants depend only on $\max\{R_1,\dots,R_d\}$ and  $
\min\{ R_1,\dots,R_d\}$. Specially, we obtain the strong
equivalence of the approximation numbers $ a_n(I_d : W_2^{\bf
R}(\Bbb T^d)\rightarrow L_2(\Bbb T^d))$ as $n\to\infty$.

For small $n, 1\le n\le 3^d$, we also determine explicitly how the
approximation numbers $ a_n(I_d : W_2^{\bf R}(\Bbb T^d)\rightarrow
L_2(\Bbb T^d))$ and $ a_n(I_d : W_2^{\infty}(\Bbb T^d)\rightarrow
L_2(\Bbb T^d))$ behave preasymptotically. We emphasis that the
preasymptotic behavior of $ a_n(I_d : W_2^{\bf R}(\Bbb
T^d)\rightarrow L_2(\Bbb T^d))$ is  completely different from its
asymptotic behavior. However, the preasymptotic behavior of $
a_n(I_d : W_2^{\infty}(\Bbb T^d)\rightarrow L_2(\Bbb T^d))$
coincides with the one of  $ a_n(I_d : W_2^{\bf R}(\Bbb
T^d)\rightarrow L_2(\Bbb T^d))$. Our main results  will be in full
analogy with those of \cite{KSU1, KMU}
 for the case of $H^{s,2s}(\Bbb T^d)$.

 Finally we consider weak
tractability results for the approximation problem of the
anisotropic Sobolev embeddings \eqref{1.1} and \eqref{1.1-11}.
 Based on   results of \cite{KSU1, KMU} and  preasymptotic behavior of $
a_n(I_d : W_2^{\infty}(\Bbb T^d)\rightarrow L_2(\Bbb T^d))$, we
show that the approximation problems \eqref{1.1} and
\eqref{1.1-11} are intractable and do not suffer from the curse of
dimensionality.

The paper is organized as follows. In Section 2 we give
definitions  of  the isotropic Sobolev spaces with various
equivalent norms,
  the anisotropic Sobolev spaces, the limit spaces $W_2^\infty(\Bbb T^d)$,  and   tractability, and then state out    main results.
  In the final Section 3 we prove the main results.

\section{Preliminaries and main results}

\subsection{Isotropic Sobolev spaces}

\

 For $d\in\Bbb N_+,\  \x =(x_1,\dots,
x_d) \in \Bbb R^d$,  we set $|\x|_p=\Big(\sum_{j=1}^\infty
|x_j|^p\Big)^{1/p}$ for $0 <p <\infty$, and
$|\x|_\infty=\max_{1\le j\le d}|x_j|$ for $p=\infty$. In what
follows $\Bbb T$ denotes the torus, i.e., $\Bbb T =[0, 2\pi]$,
where the endpoints of the interval are identified, and $\Bbb T^d$
stands for the $d$-dimensional torus. We equip $\Bbb T^d$ with the
normalized Lebesgue measure $(2\pi)^{-d}d\x$. Consequently,
$\{e^{i\k\x}:\ \k\in \Bbb Z^d\}$ is an orthonormal basis in
$L_2(\Bbb T^d)$, where $\k\x =\sum_{j=1}^d k_jx_j$. The Fourier
coefficients of a function $f\in L_1(\Bbb T^d)$ are defined as
$$\hat f(\k)=(2\pi)^{-d}\int_{\Bbb T^d}f(\x)e^{-i\k\x}d\x,\ \  \k\in \Bbb
Z^d.$$

For $0 <s <\infty$ and $0< r\le \infty$ we denote by $H^{s,r}(\Bbb
T^d)$ the isotropic Sobolev space formed by all $f\in L_2(\Bbb
T^d)$ having a finite norm
\begin{equation}\label{e1.1}\|f|H^{s,r}(\Bbb T^d )\|=\big(\sum_{\k\in \Bbb Z^d}(1+\sum_{j=1}^d|k_j|^r)^{2s/r}|\hat
f(\k)|^2\big)^{1/2}.
\end{equation}
Clearly, for the fixed isotropic smoothness index $s>0$, all these
norms are equivalent, whence all spaces $H^{s,r}(\Bbb T^d)$ with
$0< r \le \infty$ coincide. The superscript $r$ just indicates
which norm we are considering. For integer smoothness $s = m\in
\Bbb N_+$, the most natural norms are those with $r=2$ and $r=2m$.
Indeed, let $D^\alpha f$ be the distributional derivative of $f$
of order $\alpha = (\alpha _1 ,...,\alpha _d)$.  The natural
isotropic Sobolev space $W_2^m(\Bbb T^d)$  is defined as
$$W_2^m(\Bbb T^d):=\Big\{f\ |\
\|f|W_2^m(\Bbb T^d)\|=\Big(\|f|L_2(\Bbb
T^d)\|^2+\sum_{j=1}^d\|\frac{\partial ^mf}{\partial
x_j^m}|L_2(\Bbb T^d)\|^2\Big)^{1/2}<\infty\Big\}. $$ If $r=2m$,
one has  equality
$$\|f|H^{m,2m}(\Bbb T^d)\|=\|f|W_2^m(\Bbb T^d)\|.$$

The classical isotropic Sobolev space $H^m(\Bbb T^d)$  is defined
as
$$H^m(\Bbb T^d):=\Big\{f\ |\ \|f|H^m(\Bbb T^d)\|=\Big(\sum_{\alpha\in \Bbb N^d, \ |\alpha |_1\leq m}\|D^\alpha f|L_2(\Bbb T^d)\|^2\Big)^{1/2}<\infty\Big\}. $$
As shown in \cite{KSU1}, one has
\begin{equation}\label{2.1-1}\frac{1}{\sqrt{m!}}\|f|H^{m,2}(\Bbb T^d)\|\leq \|f|H^m(\Bbb
T^d)\|\leq \|f|H^{m,2}(\Bbb T^d)\|.\end{equation} Note that the
equivalence constants depend only on the smoothness index $m$, but
not on the dimension $d$.

\subsection{Anisotropic Sobolev spaces}

\

For  $f\in {L}_{2}(\Bbb T^d)$  and ${\bf R}=(R_{1},\dots,R_{d})\in
\Bbb R_+^{d}$,\ \ let
 $\frac{\partial ^{R_{j}}}{\partial x_{j}^{R_{j}}}f$ be  the $R_j$-order
 partial derivative of $f$ with respect to $x_j$ in the sense of Weyl
 defined by $$\frac{\partial ^{R_{j}}}{\partial x_{j}^{R_{j}}}f({\bf x}):=\sum_{\kk\in\, {\Bbb Z}^{d}}(
 ik_{j})^{R_{j}}\hat{f}({\bf k})e^{i\,{\bf k}{\bf x}},\ \
 (ik_{j})^{R_{j}}=|k_{j}|^{R_j}\exp\,(\frac{R_j\pi i}{2}{\rm sign}\,k_{j}).$$
     The anisotropic Sobolev space
 ${{W}}_{2}^{{\bf R}}(\Bbb T^d)$ is defined by
 $${{ W}}_{2}^{{\bf R}}(\Bbb T^d):=\Big\{f\in {L}_{2}(\Bbb T^d)\ \Big |\ \frac{\partial ^{R_{j}}}
 {\partial x_{j}^{R_{j}}}f\in L_{2}(\Bbb T^d),\ \ j=1,\dots,d\Big\}$$
 with norm \begin{align*}\|f|{{{ W}}_{2}^{{\bf R}}(\Bbb T^d)}\|&=
  \Big(\|f|L_2(\Bbb T^d)\|+\sum_{j=1}^{d}\Big\|\frac{\partial ^{R_{j}}}{\partial
  x_{j}^{R_{j}}}f\,\big|
  L_{2}(\Bbb T^d)\Big\|^{2}\Big)^{1/2}\\&=
  \Big(\sum_{{\bf k}\in \Bbb Z^d}\big(1+\sum_{j=1}^d|k_j|^{2R_j}\big)|\hat f({\bf k})|^2\Big)^{1/2}.\end{align*}It is well known that
 ${{ W}}_{2}^{{\bf R}}(\Bbb T^d)$\ is a Hilbert space. If
 $R_1=\dots=R_d=s>0$, the anisotropic Sobolev space ${{ W}}_{2}^{{\bf R}}(\Bbb
 T^d)$ recedes to the isotropic Sobolev space $H^{s,2s}(\Bbb
 T^d)$. Hence, the anisotropic Sobolev spaces are generalization
 of the isotropic Sobolev spaces. For integer smoothness ${\bf R} = {\bf m}=(m_1,\dots,m_d)\in
\Bbb N_+^d$, ${{W}}_{2}^{{\bf m}}(\Bbb T^d)$ is just the natural
anisotropic Sobolev space. The classical anisotropic Sobolev space
$H^{{\bf m}}(\Bbb T^d)$ is defined by
$$H^{\bf m}(\Bbb T^d):=\Big\{f\ |\ \|f|H^{\bf m}(\Bbb T^d)\|=
\Big(\sum_{\alpha\in \Bbb N^d, \
\frac{\alpha_1}{m_1}+\cdots+\frac{\alpha_d}{m_d}\le 1}\|D^\alpha
f|L_2(\Bbb T^d)\|^2\Big)^{1/2}<\infty\Big\}. $$ When
$m_1=\cdots=m_d=m$, the natural and classical anisotropic Sobolev
spaces ${{W}}_{2}^{{\bf m}}(\Bbb T^d)$ and $H^{{\bf m}}(\Bbb T^d)$
recede to the natural and classical (isotropic) Sobolev spaces
${{W}}_{2}^{{ m}}(\Bbb T^d)$ and $H^{{ m}}(\Bbb T^d)$. It is
easily seen that
$$ \|f|{{{ W}}_{2}^{{\bf m}}(\Bbb T^d)}\|\le \|f|H^{\bf m}(\Bbb
T^d)\|\le C_{d,{\bf m}} \|f|{{{ W}}_{2}^{{\bf m}}(\Bbb T^d)}\|,$$
where $C_{d,{\bf m}}$ is a positive constant depending only on $d$
and ${\bf m}$. Due to \eqref{2.1-1}, one may conjecture that for
general ${\bf m}\in \Bbb N_+^d$,
$$c_{\bf m}\|f|H^{\bf m}(\Bbb
T^d)\|^2\le \sum_{{\bf k}\in\Bbb
Z^d}\big(1+\sum_{j=1}^d|k_j|^{2m_j/p_0}\big)^{p_0} |\hat f({\bf
k})|^2\le C_{\bf m}\|f|H^{\bf m}(\Bbb T^d)\|^2$$ holds for some
$p_0$, $c_{\bf m}, \ C_{\bf m}$, where the constants  $c_{\bf m}$
and $C_{\bf m}$ depend only on ${\bf m}$. However, this is not
true. Indeed, for any positive numbers $p_0, c_{\bf m}, \ C_{\bf
m}$, the above inequality is not valid.  In this paper, we
consider  only the space  ${{ W}}_{2}^{{\bf R}}(\Bbb
 T^d)$.

 \subsection{Limit space of anisotropic (or isotropic) Sobolev spaces}

 \

 Let $X_j$ be Banach spaces. We define
 $\bigwedge _{j=1}^\infty X_j$ to be the space of all elements of $\bigcap_{j=1}^\infty X_j$ for which $\sup_{1\le j<\infty}
\|x|X_j\|<\infty$,   i.e.,
$$\bigwedge _{j=1}^\infty X_j=\big\{x \in\bigcap_{j=1}^\infty X_j\
\big | \ \|x|\bigwedge _{j=1}^\infty X_j\|=\sup_{1\le j<\infty}
\|x|X_j\|<\infty\big\}.$$

In this paper, we consider the space  $W_2^\infty(\Bbb
T^d)=\bigwedge _{{\bf m}\in \Bbb N_+^d} W_2^{\bf m}(\Bbb T^d)$.
Clearly, $W_2^\infty(\Bbb T^d)$ may be viewed as the limit space
of the anisotropic Sobolev spaces $W_2^{\bf R}(\Bbb T^d)$. Indeed,
when $\min\{R_1,\dots, R_d\}\to+\infty$, $W_2^{\bf R}(\Bbb T^d)$
tends to $W_2^{\infty }(\Bbb T^d)$ in  the sense of the set limit.

Note that if $f\in W_2^\infty(\Bbb T^d)$ and  $\k\in \Bbb Z^d,\
|k_i|\ge 2$ for some $i\in \{1,2,\dots,d\}$, then $\hat f(\k)=0$.
It follows that
\begin{align*}W_2^\infty(\Bbb T^d)=\Big\{ f\ |\
f(x)&=\sum_{\k\in\{-1,0,1\}^d}\hat f(\k)e^{-i\k\x}, \\  \|f
|W_2^\infty(\Bbb
T^d)\|&=\Big(\sum_{\k\in\{-1,0,1\}^d}(1+\sum_{j=1}^d|k_j|)|\hat
f(\k)|^2\Big)^{1/2}\Big\}.\end{align*}

Clearly, $W_2^\infty(\Bbb T^d)$ is a Hilbert space with $\dim
\big(W_2^\infty(\Bbb T^d)\big)=3^d$. We also note that
$$W_2^\infty(\Bbb T^d)=\bigwedge _{m=1}^\infty W_2^m(\Bbb T^d).$$

\begin{rem} We note that  the space $\bigwedge _{m=1}^\infty H^m(\Bbb T^d)$ is just the space of
constants and its dimension is $1$. This means  the investigation
of $\bigwedge _{m=1}^\infty H^m(\Bbb T^d)$ is meaningless.

\end{rem}

 \subsection{General notions of tractability}

 \

Recently, there has been an increasing interest in $d$-variate
computational problems  with large or even huge $d$. Such problems
are usually solved by algorithms that use finitely many
information operations.  The information complexity $n(\vz, d)$ is
defined as the minimal number of information operations which are
needed to find an approximating solution to  within an error
threshold $\vz$. A central issue is the study of how the
information complexity depends on $\vz^{-1}$ and $d$. Such problem
is called the tractable problem. Nowadays tractability of
multivariate problems is a very active research area (see
\cite{NW1, NW2,NW3} and the references therein).

 Let
${H_d}$ and ${G_d}$ be two sequences of Hilbert spaces and for
each $d\in \Bbb N_+$,  $F_d$ be the unit ball of  $H_d$. Assume a
sequence of bounded linear operators (solution operators)
$$S_d : H_d\rightarrow G_d$$ for all $d \in \Bbb N_+$.
For $n\in \Bbb N_+$ and $f\in F_d$, $S_d f$ can be approximated by
algorithms
$$A_{n,d}(f)=\Phi _{n,d}(L_1(f),...,L_n(f)),$$
where  $L_j,\ j=1,\dots,n$ are continuous linear functionals on
$F_d$ which are called  general information, and $\Phi _{n,d} :
\Bbb R^n\rightarrow G_d$ is an arbitrary mapping. The worst case
error $e(A_{n,d})$ of the algorithm $A_{n,d}$ is defined as
$$e(A_{n,d})=\sup_{f\in F_d}
\|S_d(f)-A_{n,d}(f)\|_{G_d}.$$  Furthermore, we define the $n$th
minimal worst-case error as
$$e(n,d )=\inf_{A_{n,d}}e(A_{n,d}),$$
where the infimum is taken over all algorithms using $n$
information operators $L_1,L_2,...,L_n$. For $n=0$, we use
$A_{0,d}=0$. The error of $A_{0,d}$ is called the initial error
and is given by
$$e(0,d )=\sup_{f\in F_d}\|S_d f\|_{G_d}.$$
The $n$th minimal worst-case error $e(n,d)$ with respect to
arbitrary  algorithms and general information in the Hilbert
setting is just the $n+1$-approximation number $a_{n+1}(S_d:H_d\to
G_d)$ (see \cite[p. 118]{NW1}), i.e.,
$$e(n,d)=a_{n+1}(S_d:H_d\to G_d).$$

    In this paper, we consider the embedding operators $S_d=I_d$
(formal identity operators) with $e(0,d)=\|I_d\|=1$. In other
words, the normalized error criterion and the absolute error
criterion coincide.
  For $\varepsilon \in (0,1)$ and $d\in \Bbb N_+$, let $n(\varepsilon, d)$ be the information
  complexity which is defined as the minimal number of continuous linear functionals which are necessary to obtain
  an $\varepsilon -$approximation of $I_d$, i.e.,
$$n(\varepsilon ,d)=\min\{n\,|\,e(n,d)\leq \varepsilon \}.$$

We say that the approximation problem  is called weakly tractable,
if
\begin{equation*}\label{t1.2}
\lim_{\varepsilon ^{-1}+d\rightarrow \infty }\frac{\ln
n(\varepsilon ,d)}{\varepsilon ^{-1}+d}=0,
\end{equation*}i.e., $n(\vz,d)$ neither depends exponentially on $1/\vz$ nor on $d$. Otherwise, the approximation problem is called
intractable.

 If there exist two constants $C,t>0$ such that for
all $d\in \Bbb N_+, \ \varepsilon
 \in(0,1)$,
\begin{equation*}\label{t1.4}
n(\varepsilon ,d)\leq C\exp(t(1+\ln\varepsilon ^{-1})(1+\ln d)),
\end{equation*} then the  approximation problem is quasi-polynomially tractable.

If  there exist positive numbers $C, \varepsilon _0, \gamma $ such
that for all $0<\varepsilon\leq \varepsilon _{0}$ and infinitely
many $d\in \Bbb N_+$,
\begin{equation*}\label{1.7}
n(\varepsilon ,d)\geq C(1+\gamma )^{d},
\end{equation*}  then we
say that the approximation problem suffers from the curse of
dimensionality.

Recently, Siedlecki and  Weimar introduced the notion of
$(\alpha,\ \beta)$-weak tractability in \cite{SW}. If for some
fixed $\alpha,\ \beta>0$ it holds
\begin{equation*}
\lim_{\varepsilon ^{-1}+d\rightarrow \infty }\frac{\ln
n(\varepsilon ,d)}{(\varepsilon ^{-1})^{\alpha }+d^{\beta }}=0,
\end{equation*}
then the approximation problem is called $(\alpha,\ \beta)$-weakly
tractable.

 Clearly,  $(1,1)$-weak tractability is  just weak
tractability,  whereas the approximation problem is uniformly
weakly tractable  if it is $(\alpha,\beta)$-weakly tractable  for
all positive $\alpha$ and $\beta$ (see \cite{S}). Also, if the
approximation problem suffers from the curse of dimensionality,
then for any $\alpha>0, \ 0<\beta\le1,$ it is not
$(\alpha,\beta)$-weakly tractable.

\subsection{Main results}

\

In the paper,  we  discuss  the asymptotic and preasymptotic
behavior of the approximation numbers and tractability of the
embeddings from  $W_2^{\bf R}(\Bbb T^d)$  and $W_2^{\infty}(\Bbb
T^d)$ to $L_2(\Bbb T^d)$. For ${\bf R}=(R_1,\dots,R_d)\in\Bbb
R^d_+$ and $t>0$, denote by
$$B_{\bf R}^d(t):=\{\x\in \Bbb R^d:\sum_{j=1}^{d}|x_j|^{R_j}\leq
t\}$$ the generalized  ball in $\Bbb R^d$. We write $B_{\bf R}^d$
instead of $B_{\bf R}^d(1)$ for brevity.  When
$R_1=\dots=R_d=r>0$, $B_{\bf R}^d$ recedes to the unit ball
$B_r^d$ in $\Bbb R^d$ with respect to the (quasi)-norm
$|\cdot|_r$. It follows from \cite{Wa} that the volume of the unit
ball $B^d_{\bf R}$ is
\begin{equation}\label{2.9}
\text{vol}(B^d_{\bf R}):=\text{vol}\{\x\in \Bbb
R^d:\sum_{j=1}^d|x_j|^{R_j}\leq 1\}=2^d\frac{\Gamma(1 +
1/R_1)\cdots \Gamma(1 + 1/R_d)}{\Gamma(1 + 1/R_1+\cdots+1/R_d)},
\end{equation}
where $\Gz(x)=\int_0^\infty t^{x-1}e^{-t}dt$ is the Gamma
function. Specially, we have
\begin{equation}\label{2.10}\text{vol}(B^d_r):=\text{vol}\{\x\in
\Bbb R^d:\sum_{j=1}^d|x_j|^{r}\leq 1\}=2^d\frac{\Gamma(1 +
1/r)^d}{\Gamma(1 + d/r)}.\end{equation}

In \cite[Theorem 1.1]{KMU} or \cite{KSU1}, the authors used
entropy number argument, and combinatorial and volume arguments to
 obtain the preasymptotic and asymptotic behavior of the
approximation numbers $a_n(I_d: H^{s,r}(\Bbb T^d)\rightarrow
L_2(\Bbb T^d))$.   For $0<r\le \infty $ and $s>0$, they got
\begin{align}\label{2.3.1}
&a_n(I_d: H^{s,r}(\Bbb T^d)\rightarrow L_2(\Bbb T^d))\asymp
_{s,r}\left\{\begin{matrix}
1, & \ \  1\le n\leq d,\\
 \Big(\frac{\log(1+\frac{d}{\log n})}{\log n}\Big)^{s/r},&\ \  d\le n\le 2^d, \\
 d^{-s/r}n^{-s/d},&\ \  n\ge 2^d,
\end{matrix}\right.
\end{align}
where $\log x=\log_2 x$, $A\asymp B$ means that there exist two
constants $c$ and $C$ which are called the equivalent constants
such that $c A\le B\le C A$, and $\asymp_{s,r}$ indicates that the
equivalent constants depend only on $s, r$. It is remarkable that
the equivalent constants in the above preasymptotics and
asymptotics depend not on $d$ and $n$. For $n\to \infty$,  the
equivalent constants in the lower and upper bound even converge.
Indeed, in \cite{KSU1} (see also \cite[Proposition 4.1]{CKS}) the
authors obtained the following result:
\begin{align}\label{2.6-111}\lim_{n\rightarrow \infty}n^{s/d}a_n\big(I_d : H^{s,r}(\Bbb T^d)\rightarrow L_2(\Bbb T^d)\big)=(\mathrm{vol}(B_r^d))^{s/d}\asymp_{s,r}
d^{-s/r}.\end{align}

 In this paper, we generalize the above results to  $W_2^{\bf R}(\Bbb T^d)$ and  $W_2^{\infty}(\Bbb T^d)$. We use the volume argument to get the asymptotic behavior of
 $ a_n(I_d:\ W_2^{{\bf R}}(\Bbb T^d)\rightarrow L_2(\Bbb T^d))$. Note that our
 generalization is not trivial since we need to establish a new
 inequality instead of the triangle inequality of the (quasi)-norm
$|\cdot|_r$. We use \eqref{2.3.1} to obtain the preasymptotic
behavior of $ a_n(I_d:\ W_2^{{\bf R}}(\Bbb T^d)\rightarrow
L_2(\Bbb T^d))$. We use combinatorial argument to
 obtain the preasymptotic  behavior of the
approximation numbers $a_n(I_d: W^\infty _2(\Bbb T^d)\rightarrow
L_2(\Bbb T^d))$.  Finally we  give
 the tractability results about the approximation problems  $I_d\:: W_2^{\bf R}(\Bbb T^d)\rightarrow L_2(\Bbb T^d)$ and
  $I_d\:: W_2^{\infty}(\Bbb T^d)\rightarrow L_2(\Bbb T^d)$. Our main results  are formulated as
 follows.

\begin{thm}\label{t2.1}
Let ${\bf R}=(R_1,...,R_d)\in \Bbb R_+^d$,
$u=\max\{R_1,\dots,R_d\}$, $\ v=\min\{ R_1,\dots,R_d\}$, $g({\bf
R})=\frac{1}{1/R_1+...+1/R_d}$.  Then we have
\begin{align}\label{2.6-1}
&a_n(I_d:\ W_2^{{\bf R}}(\Bbb T^d)\rightarrow L_2(\Bbb T^d))\asymp
_{u,v}\left\{\begin{matrix}
1, & \ \  1\le n\leq d,\\
 \Big(\frac{\log(1+\frac{d}{\log n})}{\log n}\Big)^{1/2},&\ \  d\le n\le 3^d, \\
 d^{-1/2}n^{-g({\bf R})},&\ \  n\ge 3^d.
\end{matrix}\right.
\end{align}

\end{thm}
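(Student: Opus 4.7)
The plan is to exploit that $I_d:W_2^{\bf R}(\Bbb T^d)\to L_2(\Bbb T^d)$ is diagonal in the orthonormal basis $\{(1+\sum_{j=1}^d|k_j|^{2R_j})^{-1/2}e^{i\k\x}\}_{\k\in\Bbb Z^d}$ of $W_2^{\bf R}(\Bbb T^d)$; hence $a_n$ is the $n$-th largest singular value $\sigma_\k=(1+\sum_{j=1}^d|k_j|^{2R_j})^{-1/2}$, or equivalently $a_n^{-2}$ is the smallest $t$ for which $\#\{\k\in\Bbb Z^d:1+\sum_{j=1}^d|k_j|^{2R_j}\le t\}\ge n$. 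The regime $1\le n\le d$ is then immediate: $a_n\le a_1=1$, while the $2d+1\ge n$ indices ${\bf 0},\pm e_1,\dots,\pm e_d$ all satisfy $1+\sum_{j=1}^d|k_j|^{2R_j}=2$, giving $a_n\ge 2^{-1/2}$.

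For the preasymptotic regime, when $d\le n\le 2^d$ I would apply a monotone sandwich. Since $v\le R_j\le u$ and $|k_j|\in\{0\}\cup[1,\infty)$ for every $k_j\in\Bbb Z$, coordinate-wise one has $|k_j|^{2v}\le|k_j|^{2R_j}\le|k_j|^{2u}$, so the diagonal singular values of $I_d:W_2^{\bf R}(\Bbb T^d)\to L_2(\Bbb T^d)$ are pointwise bounded above by those of $I_d:H^{v,2v}(\Bbb T^d)\to L_2(\Bbb T^d)$ and below by those of $I_d:H^{u,2u}(\Bbb T^d)\to L_2(\Bbb T^d)$. Consequently
\begin{equation*}
a_n(I_d:H^{u,2u}(\Bbb T^d)\to L_2(\Bbb T^d))\le a_n(I_d:W_2^{\bf R}(\Bbb T^d)\to L_2(\Bbb T^d))\le a_n(I_d:H^{v,2v}(\Bbb T^d)\to L_2(\Bbb T^d)),
\end{equation*}
and both extremes collapse to the claimed preasymptotic form via \eqref{2.3.1} with $s/r=1/2$. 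To extend to $2^d\le n\le 3^d$, observe that the preasymptotic expression is equivalent to $d^{-1/2}$ on this range with absolute constants; the upper bound then follows from monotonicity of $a_n$ and the $n=2^d$ case, while the lower bound uses the $3^d$ indices $\k\in\{-1,0,1\}^d$, each of which satisfies $1+\sum_{j=1}^d|k_j|^{2R_j}\le 1+d$, whence $a_n\ge(1+d)^{-1/2}$ for every $n\le 3^d$.

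The asymptotic regime $n\ge 3^d$ is the technical heart of the theorem and proceeds by a volume argument. With $\Gamma(N)=\{\k\in\Bbb Z^d:\sum_{j=1}^d|k_j|^{2R_j}\le N\}$, the rescaling $y_j=x_j/N^{1/(2R_j)}$ yields
\begin{equation*}
\mathrm{vol}(B_{2{\bf R}}^d(N))=N^{1/(2g({\bf R}))}\mathrm{vol}(B_{2{\bf R}}^d),
\end{equation*}
and the plan is to sandwich $|\Gamma(N)|$ between two such volumes at dilated parameters, then invert to obtain $a_n\asymp\mathrm{vol}(B_{2{\bf R}}^d)^{g({\bf R})}n^{-g({\bf R})}$. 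The main obstacle is precisely this lattice-to-volume comparison: the isotropic proofs of \cite{KSU1,KMU} invoke the triangle inequality for the norm $|\cdot|_{2s}$, but no single norm is available in the anisotropic setting. The substitute, as announced in the introduction, is a coordinate-wise growth inequality of the form
\begin{equation*}
|k_j\pm\tfrac12|^{2R_j}\le(\tfrac32)^{2R_j}|k_j|^{2R_j}+2^{-2R_j},\qquad k_j\in\Bbb Z,
\end{equation*}
summed over $j$ with exponents controlled uniformly on $[v,u]$; this suffices to sandwich $\bigcup_{\k\in\Gamma(N)}(\k+[-\tfrac12,\tfrac12]^d)$ between two anisotropic balls whose parameters differ from $N$ only by factors depending on $u$ and $v$. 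Finally, Stirling's formula applied to \eqref{2.9} delivers $\mathrm{vol}(B_{2{\bf R}}^d)^{g({\bf R})}\asymp_{u,v}d^{-1/2}$: the factors $2^{dg({\bf R})}$ and $\prod_{j=1}^d\Gamma(1+1/(2R_j))^{g({\bf R})}$ are $\asymp_{u,v}1$ since $dg({\bf R})\in[v,u]$, whereas $\Gamma(1+\sum_{j=1}^d 1/(2R_j))^{g({\bf R})}$ contributes the factor $\sqrt d$ through the Stirling main term.
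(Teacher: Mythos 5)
Your handling of the ranges $1\le n\le d$, $d\le n\le 2^d$ and $2^d\le n\le 3^d$ is correct and essentially coincides with the paper's route: the two norm-one embeddings $H^{u,2u}(\Bbb T^d)\to W_2^{\bf R}(\Bbb T^d)\to H^{v,2v}(\Bbb T^d)$ together with \eqref{2.3.1} (where $s/r=1/2$), plus elementary bounds at the endpoints; likewise your Gamma-function computation of $(\mathrm{vol}(B_{2{\bf R}}^d))^{g({\bf R})}\asymp_{u,v}d^{-1/2}$ matches Lemma \ref{lem3.4}, and your coordinate-wise inequality is a legitimate substitute for Lemma \ref{l4.2}. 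The genuine gap is in the third line of \eqref{2.6-1}: you claim the lattice-to-volume sandwich yields $a_n\asymp_{u,v}(\mathrm{vol}(B_{2{\bf R}}^d))^{g({\bf R})}n^{-g({\bf R})}$ for \emph{all} $n\ge 3^d$, asserting that the parameters of the two comparison balls ``differ from $N$ only by factors depending on $u$ and $v$.'' That is not true. Summing your inequality over $j$ produces, besides the factor $(3/2)^{2u}$, an \emph{additive} term $\sum_{j=1}^d 2^{-2R_j}\ge 4^{-u}d$ coming from the coordinates with $k_j=0$ (in the paper's normalization this is the shift $b_{\bf R}\le d^{1/(2p)}$ of the radius $m$). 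This correction is a $u,v$-factor only when the radius parameter dominates it, i.e.\ when $m^{2p}\ge c_{u,v}\,d$, which corresponds to $n$ exceeding a threshold of the form $E^d$ with $E=E(u,v)$ typically much larger than $3$ — exactly why Lemma \ref{l3.5} is stated only for $n>E^d$. For $n$ just above $3^d$ the relevant radius satisfies $m\asymp_{u,v}d^{1/(2p)}\asymp_{u,v}b_{\bf R}$, the lower comparison set can be empty (in the paper's notation $(m-b_{\bf R})_+=0$), and your sketch gives no two-sided bound with constants depending only on $u,v$ on the range $3^d\le n\le E^d$.

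To close the gap you must either (i) bridge $3^d\le n\le E^d$ the way the paper does, using the sandwich you already have: on that range $n^{-g({\bf R})}\asymp_{u,v}1$ and both $a_n(I_d:H^{u,2u}(\Bbb T^d)\to L_2(\Bbb T^d))$ and $a_n(I_d:H^{v,2v}(\Bbb T^d)\to L_2(\Bbb T^d))$ are $\asymp_{u,v}d^{-1/2}$ by \eqref{2.3.1}, so the claimed bound follows without any volume argument (alternatively: upper bound by monotonicity from $n=2^d$, lower bound from the $(2M+1)^d\ge E^d$ lattice points $\k\in\{-M,\dots,M\}^d$, which satisfy $1+\sum_j|k_j|^{2R_j}\le 1+M^{2u}d$); or (ii) prove a counting statement showing that $C(m,{\bf R},d)\ge 3^d$ forces $m^{2p}\ge c_{u,v}d$, so that the additive term is absorbed. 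Without one of these additions, the proof of \eqref{2.6-1} for $n\ge 3^d$ is incomplete, even though your plan does work, in parallel with Lemma \ref{l3.5}, once $n>E^d$.
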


\begin{rem}Let ${\bf R}=(R_1,\dots,R_d)\in\Bbb R_+^d$,  $u=\max\{R_1,\dots,R_d\}$,  $v=\min\{
R_1,\dots,R_d\}$, and  $g({\bf R})=\frac{1}{1/R_1+...+1/R_d}$.
Then for all $n\in\Bbb N_+$, the above theorem gives the exact
decay rate in $n$ and the exact order of the constants with
respect to $d$. We emphasis that the equivalent constants in
\eqref{2.6-1} are independent of $d$ and $n$. Note that when
$R_1=\dots=R_d=s>0$, $g({\bf R})=s/d$,  the anisotropic Sobolev
space ${{ W}}_{2}^{{\bf R}}(\Bbb
 T^d)$ recedes to the isotropic Sobolev space $H^{s,2s}(\Bbb
 T^d)$,  and \eqref{2.6-1} recedes to \eqref{2.3.1} with $r=2s$. \end{rem}

\begin{thm}\label{t2.2}
Let ${\bf R}=(R_1,...,R_d)\in \Bbb R_+^d$. Then we have
\begin{equation}\label{2.6-11}\lim_{n\rightarrow \infty}n^{g({\bf R})}a_n(I_d\:: W_2^{\bf
R}(\Bbb T^d)\rightarrow L_2(\Bbb T^d))=(\mathrm{vol}(B_{2{\bf
R}}^d))^{g({\bf R})},\end{equation} where $g({\bf
R})=\frac{1}{1/R_1+...+1/R_d}$.
\end{thm}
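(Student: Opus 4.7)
The plan is to exploit the Hilbert-space structure to realise $a_n(I_d)$ as the $n$-th largest term of an explicit sequence of singular numbers, and then reduce the asymptotic to a classical lattice-point count in dilates of the anisotropic body $B_{2{\bf R}}^d$. Since both $W_2^{{\bf R}}(\Bbb T^d)$ and $L_2(\Bbb T^d)$ are Hilbert spaces that are simultaneously diagonalised by the trigonometric system $\{e^{i\k\x}\}_{\k\in\Bbb Z^d}$, the definition of the anisotropic norm makes the singular numbers of $I_d$ exactly $\lambda_{\k}=(1+\sum_{j=1}^d|k_j|^{2R_j})^{-1/2}$, $\k\in\Bbb Z^d$. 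In the Hilbert setting approximation numbers coincide with singular numbers, so $a_n(I_d)$ is just the $n$-th largest of the $\lambda_{\k}$, and the whole problem collapses to the asymptotic analysis of the lattice-point counting function
\[M(s):=\#\Bigl\{\k\in\Bbb Z^d:\sum_{j=1}^d|k_j|^{2R_j}\le s\Bigr\},\qquad s>0.\]

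Next I would compute the volume of the continuous dilate. The anisotropic substitution $y_j=s^{-1/(2R_j)}x_j$ has Jacobian $s^{\sum_{j=1}^d 1/(2R_j)}=s^{1/(2g({\bf R}))}$, so
\[\mathrm{vol}\Bigl\{\x\in\Bbb R^d:\sum_{j=1}^d|x_j|^{2R_j}\le s\Bigr\}=s^{1/(2g({\bf R}))}\,\mathrm{vol}(B_{2{\bf R}}^d).\]
A standard Weyl-type sandwich, comparing $M(s)$ to this volume by packing $\Bbb Z^d$-translates of the unit cube inside and around the dilated body, then yields $M(s)=s^{1/(2g({\bf R}))}\,\mathrm{vol}(B_{2{\bf R}}^d)\bigl(1+o(1)\bigr)$ as $s\to\infty$.

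For the inversion step, ordering the singular values $\lambda_{\k_1}\ge\lambda_{\k_2}\ge\cdots$ and writing $a_n(I_d)=\lambda_{\k_n}=(1+s_n)^{-1/2}$, the sequence $s_n\to\infty$ satisfies $n\sim M(s_n)$. Inverting the asymptotic for $M$ gives $s_n\sim\bigl(n/\mathrm{vol}(B_{2{\bf R}}^d)\bigr)^{2g({\bf R})}$, hence $a_n(I_d)\sim s_n^{-1/2}\sim n^{-g({\bf R})}\bigl(\mathrm{vol}(B_{2{\bf R}}^d)\bigr)^{g({\bf R})}$, which is the claimed strong equivalence.

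The main obstacle is the error estimate in the lattice-point count: the cube sandwich bounds $M(s)$ between the volumes of inner and outer dilates of the body differing by a $\sqrt{d}$-neighborhood of its boundary, and one has to verify that this boundary annulus has volume of strictly smaller order than $s^{1/(2g({\bf R}))}$. A short calculation using H\"older (or, if all $R_j\ge 1/2$, Lipschitz) estimates for $\sum_{j}|x_j|^{2R_j}$ bounds the annular thickness in the $s$-scale by $O(s^{1-1/(2u)})$ with $u=\max_j R_j$, and hence the annular volume by $O(s^{1/(2g({\bf R}))-1/(2u)})=o(s^{1/(2g({\bf R}))})$. Since $d$ is fixed throughout this theorem, no attention needs to be paid to dimensional constants, in contrast to \thmref{t2.1}.
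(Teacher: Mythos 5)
Your proposal is correct and follows essentially the same route as the paper: identify $a_n(I_d)$ with the non-increasing rearrangement of the singular numbers $(1+\sum_{j}|k_j|^{2R_j})^{-1/2}$, compare the lattice counting function with the volume of the dilated anisotropic ball via a unit-cube sandwich together with the scaling identity $\mathrm{vol}(B^d_{2{\bf R}}(s))=s^{1/(2g({\bf R}))}\mathrm{vol}(B^d_{2{\bf R}})$, and then invert. The one genuine difference is the device used to control the cube-sandwich error: the paper establishes the triangle inequality for $\big(\sum_j|x_j|^{2R_j}\big)^{1/(2p)}$ with $p=\max\{1/2,R_1,\dots,R_d\}$ (Lemma \ref{l4.2}), which turns the boundary discrepancy into a fixed additive constant $b_{\bf R}$ on that normalized scale; this exact form is what also yields the dimension-explicit two-sided bounds of Lemma \ref{l3.5} reused for Theorem \ref{t2.1}. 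You instead bound the perturbation of $\sum_j|x_j|^{2R_j}$ under a shift of size $1/2$ per coordinate by a H\"older/Lipschitz estimate, getting an annulus of volume $o(s^{1/(2g({\bf R}))})$, which is perfectly sufficient here since $d$ is fixed, though it would not give the $d$-uniform constants the paper needs elsewhere. Two small points to tidy up: when $u=\max_j R_j<1/2$ the annular thickness is $O(1)$ rather than $O(s^{1-1/(2u)})$ (the latter exponent is negative), but the conclusion is unchanged; and in the inversion step the relation $n\sim M(s_n)$ should be justified by squeezing $M(s_n-1)\le\#\{{\bf k}: \sum_j|k_j|^{2R_j}<s_n\}<n\le M(s_n)$ and noting both ends are $\sim s_n^{1/(2g({\bf R}))}\mathrm{vol}(B^d_{2{\bf R}})$, which handles possible ties.
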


\begin{rem}Let ${\bf R}=(R_1,...,R_d)\in \Bbb R_+^d$. Then for  $g({\bf R})=\frac{1}{1/R_1+...+1/R_d}>1/2$,
 the space $W_2^{\bf R}(\Bbb T^d)$ can compactly embed into $L_\infty(\Bbb T^d)$ or $C(\Bbb T^d)$ (see \cite[Theorem 3.5]{T}).
 According to \cite[Theorem 3.1]{CKS}, we know that the condition $g({\bf R})>1/2$ is sufficient and necessary condition for the
 embedding from  $W_2^{\bf R}(\Bbb T^d)$ to $L_\infty(\Bbb T^d)$ or $C(\Bbb T^d)$. Furthermore, for $g({\bf R})>1/2$, using the proof technique of \cite[Theorem 4.3]{T} we
 can show
\begin{equation}\label{2.7-11}\lim_{n\rightarrow \infty}n^{g({\bf R})-\frac12}a_n(I_d\::
W_2^{\bf R}(\Bbb T^d)\rightarrow L_\infty(\Bbb T^d))= (2g({\bf
R})-1)^{-\frac12}(\mathrm{vol}(B_{2{\bf R}}^d))^{g({\bf
R})}.\end{equation} Note that \eqref{2.7-11} holds if we replace
$L_\infty(\Bbb T^d)$ with $C(\Bbb T^d)$.
   \end{rem}

\begin{rem} When $R_1=R_2=\cdots=R_d=s>0$, \eqref{2.6-11} recedes to \eqref{2.6-111} with $r=2s$.  One can rephrase \eqref{2.6-11} and \eqref{2.7-11} as  strong
equivalences $$a_n\big(I_d :W_2^{\bf R}(\Bbb T^d)\rightarrow L_2
(\Bbb T^d)\big)\sim   n^{-g({\bf R})}(\mathrm{vol}(B_{2{\bf
R}}^d))^{g({\bf R})}
$$ and \begin{align*} a_n\big(I_d :W_2^{\bf R}(\Bbb
T^d)\rightarrow L_\infty (\Bbb T^d)\big)&\sim  (2g({\bf
R})-1)^{-\frac12} n^{-g({\bf R})+\frac12}(\mathrm{vol}(B_{2{\bf
R}}^d))^{g({\bf R})}\\ &\sim  (2g({\bf R})-1)^{-\frac12}
n^{\frac12} a_n\big(I_d :W_2^{\bf R}(\Bbb T^d)\rightarrow L_2(\Bbb
T^d)\big).\end{align*} The novelty of Theorem \ref{t2.2} and
\eqref{2.7-11} is that they give strong equivalences and provide
asymptotically optimal constants, for arbitrary fixed $d$ and
${\bf R}$.
\end{rem}

\begin{thm}\label{t2.2-1}We have
\begin{align}\label{2.9-11}
&a_n(I_d:\ W_2^{\infty}(\Bbb T^d)\rightarrow L_2(\Bbb T^d))\asymp
\left\{\begin{matrix}
1, & \ \  1\le n\leq d,\\
 \Big(\frac{\log(1+\frac{d}{\log n})}{\log n}\Big)^{1/2},&\ \  d\le n\le
 3^d,
\end{matrix}\right.
\end{align}and $a_n(I_d:\ W_2^{\infty}(\Bbb T^d)\rightarrow L_2(\Bbb T^d))=0$ for $n>3^d$, where the equivalent constants do not depend on $n$ and
$d$.
\end{thm}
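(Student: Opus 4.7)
The plan is to exploit the fact that $W_2^{\infty}(\Bbb T^d)$ is a \emph{finite-dimensional} Hilbert space of dimension $3^d$, spanned by the Fourier system $\{e^{i\k\x}:\k\in\{-1,0,1\}^d\}$. In this basis the embedding $I_d:W_2^{\infty}(\Bbb T^d)\to L_2(\Bbb T^d)$ is a diagonal operator with singular values $(1+|\k|_1)^{-1/2}$, so the approximation numbers $a_n(I_d)$ coincide with the nonincreasing rearrangement of these numbers. Since there are exactly $3^d$ of them in total, the claim $a_n(I_d)=0$ for $n>3^d$ is immediate.

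To identify the rearrangement, I would count: for $\k\in\{-1,0,1\}^d$, $|\k|_1$ equals the number of nonzero coordinates, and there are $\binom{d}{m}2^m$ such $\k$ with $|\k|_1=m$. Setting $N_m=\sum_{j=0}^{m}\binom{d}{j}2^j$ (with $N_{-1}=0$; note $N_d=3^d$ by the binomial theorem), one obtains the exact formula
$$a_n\bigl(I_d:W_2^{\infty}(\Bbb T^d)\to L_2(\Bbb T^d)\bigr)=(1+m)^{-1/2}\quad\text{whenever}\quad N_{m-1}<n\le N_m.$$
For $1\le n\le d$ one has $n\le N_1=1+2d$, hence $m\in\{0,1\}$ and $a_n\in\{1,2^{-1/2}\}$, which yields $a_n\asymp 1$.

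For the main preasymptotic range $d\le n\le 3^d$, the task reduces to proving that the unique $m$ determined by $N_{m-1}<n\le N_m$ satisfies
$$1+m\;\asymp\;\frac{\log n}{\log(1+d/\log n)}.$$
Using the standard bounds $(d/m)^m\le\binom{d}{m}\le(ed/m)^m$ together with the fact that $\binom{d}{j}2^j$ is unimodal with peak near $2d/3$, one obtains $\log N_m\asymp m\log(1+d/m)$ throughout the regime $1\le m\le 2d/3$, while for $m\ge 2d/3$ the bound $N_m\ge c\cdot 3^d$ handles the tail. Inverting $\log n\asymp m\log(1+d/m)$ then produces the desired expression.

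The principal difficulty lies in this final inversion step: one must verify the equivalence $1+m\asymp\log n/\log(1+d/\log n)$ uniformly in both $n$ and $d$, with the two limiting regimes (small $m$, near $n\sim d$, versus $m\sim d$, near $n\sim 3^d$) glued together cleanly. Fortunately an essentially identical inversion underlies the mixed-smoothness analogue \eqref{2.3.1} of \cite{KMU,KSU1}, so I expect the same technique to transfer directly, yielding the equivalence with constants independent of $n$ and $d$.
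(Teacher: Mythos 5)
Your proposal is correct and follows essentially the same route as the paper's proof: diagonalize $I_d$ on the basis $\{e^{i\k\x}:\k\in\{-1,0,1\}^d\}$, identify $a_n=(1+m)^{-1/2}$ on the counting blocks $C(m-1,d)<n\le C(m,d)$ with $C(m,d)=\sum_{j\le m}2^j\binom dj$, and then invert $\log n\asymp m\log(1+d/m)$ via the standard binomial estimates. The inversion step you defer to the technique of \cite{KMU,KSU1} is exactly what the paper carries out explicitly (splitting at $m\asymp d$ instead of $2d/3$ and using the inequalities (3.5), (3.6) of \cite{KSU1}), so no essential idea is missing.
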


\begin{thm}\label{t2.3}
Let ${\bf R}=(R_1,...,R_d)\in \Bbb R_+^d$, $\inf\limits_{1\le
j<\infty}R_j>0$, and $\alpha,\,\beta>0$. Then the approximation
problems
$$I_d\:: W_2^{\bf R}(\Bbb T^d)\rightarrow L_2(\Bbb T^d)\ \ {\rm and}\ \ I_d\:: W_2^{\infty}(\Bbb T^d)\rightarrow L_2(\Bbb T^d)$$
are $(\alpha, \beta)$-weakly tractable if and only if $\alpha>2$
and $\beta>0$ or  $\alpha>0$ and $\beta>1$. Specially, the
approximation problems $I_d\:: W_2^{\bf R}(\Bbb T^d)\rightarrow
L_2(\Bbb T^d)$ and $I_d\:: W_2^{\infty}(\Bbb T^d)\rightarrow
L_2(\Bbb T^d)$ are  intractable and do not suffer from the curse
of dimensionality.
\end{thm}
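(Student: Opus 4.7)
My plan is to translate Theorems~\ref{t2.1} and~\ref{t2.2-1} into matching two-sided estimates on $n(\varepsilon,d)=\min\{n:a_{n+1}\le\varepsilon\}$ and then read the tractability statements off directly. The crucial consequence of the hypothesis $v:=\inf_jR_j>0$ is the uniform lower bound $g({\bf R})=(1/R_1+\cdots+1/R_d)^{-1}\ge v/d$, which keeps all constants below independent of $d$.

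For the upper bound on $n(\varepsilon,d)$, I would invert the preasymptotic estimate by the ansatz $\log n\approx A\log d/\varepsilon^2$, which makes $\log(1+d/\log n)/\log n\lesssim\varepsilon^2$ and yields $\log n(\varepsilon,d)\le C(1+\log d)/\varepsilon^2$ in the regime $\varepsilon\ge c/\sqrt{d}$. In the complementary regime $\varepsilon<c/\sqrt{d}$ I would invert the asymptotic part: the bound $1/g({\bf R})\le d/v$ gives $\log n(\varepsilon,d)\le C_v d\log(1/\varepsilon+1)$ for $W_2^{{\bf R}}$, while the vanishing $a_n=0$ for $n>3^d$ gives the still simpler $\log n(\varepsilon,d)\le d\log 3+1$ for $W_2^{\infty}$. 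For the matching lower bound I would invoke the lower bound in Theorem~\ref{t2.1} along $\varepsilon_n=1/\sqrt{d_n}$ with $d_n\to\infty$: if $\log n\le c''d$ with $c''$ sufficiently small, then $\log(1+d/\log n)/\log n\ge\log 2/(c''d)$, violating the necessary inequality $\log(1+d/\log n)/\log n\le\varepsilon_n^2/c^2=1/(c^2 d_n)$. Thus $\log n(\varepsilon_n,d_n)\gtrsim d_n$.

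For sufficiency, I would combine the upper bound with the Young-type inequality $\varepsilon^{-\alpha}+d^\beta\ge\varepsilon^{-\alpha\theta}d^{\beta(1-\theta)}$ for a parameter $\theta\in[0,1]$ chosen per case. The contribution $(1+\log d)/\varepsilon^2$ is killed when $\beta>1$ by taking $\theta=0$ together with $1/\varepsilon^2\le d$ in the preasymptotic regime, yielding $(1+\log d)/d^{\beta-1}\to 0$; and when $\alpha>2$ by taking $\theta=1$ and using $\log d\le 2\log(1/\varepsilon)+O(1)$ to get $(1+\log d)\varepsilon^{\alpha-2}\to 0$. The contribution $d\log(1/\varepsilon+1)$ is killed when $\beta>1$ by any $\theta\in(0,1-1/\beta)$, since then both $\log(1/\varepsilon+1)\varepsilon^{\alpha\theta}\to 0$ and $d^{1-\beta(1-\theta)}\to 0$; and when $\alpha>2$ by using the asymptotic constraint $d\le c^2/\varepsilon^2$ to get $d\varepsilon^\alpha\le c^2\varepsilon^{\alpha-2}\to 0$. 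Running this analysis over the four subcases ``$\varepsilon\to 0$ vs.\ bounded'' $\times$ ``$d\to\infty$ vs.\ bounded'' establishes $(\alpha,\beta)$-weak tractability under the stated conditions.

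For necessity, the sequence $\varepsilon_n=1/\sqrt{d_n}$, $d_n\to\infty$ does all the work: the lower bound gives $\log n(\varepsilon_n,d_n)\ge c d_n$, while $\varepsilon_n^{-\alpha}+d_n^\beta=d_n^{\alpha/2}+d_n^\beta\le 2d_n$ whenever $\alpha\le 2$ and $\beta\le 1$, so the tractability ratio stays bounded below by $c/2$. The special case $\alpha=\beta=1$ yields intractability. Absence of the curse of dimensionality follows from the preasymptotic upper bound: for fixed $\varepsilon$, $n(\varepsilon,d)\le(d+2)^{C/\varepsilon^2}$ once $d$ is large enough, which grows only polynomially in $d$ and hence cannot dominate $(1+\gamma)^d$ infinitely often for any $\gamma>0$. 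The main obstacle I anticipate is the bookkeeping in the sufficiency analysis: the two terms of the upper bound on $\log n$ must each be paired with the appropriate Young parameter $\theta$ and each of the four $\varepsilon/d$-limit behaviours checked uniformly in ${\bf R}$, which is where the hypothesis $v>0$ is essential.
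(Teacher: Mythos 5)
Your plan is a genuinely different and in principle viable route: instead of citing the known $(\alpha,\beta)$-weak tractability of the isotropic problem $H^{s,2s}(\Bbb T^d)\to L_2(\Bbb T^d)$ and transferring it through norm-one embeddings (which is all the paper does for the sufficiency part), you propose to invert Theorems \ref{t2.1} and \ref{t2.2-1} into two-sided bounds on $n(\varepsilon,d)$ and run the limit analysis by hand. The gap is in your very first step: the assertion that $\inf_j R_j=v>0$ ``keeps all constants below independent of $d$''. The equivalence constants in Theorem \ref{t2.1} depend on \emph{both} $v$ and $u=\max\{R_1,\dots,R_d\}$, and the hypothesis of Theorem \ref{t2.3} puts no upper bound on the sequence $(R_j)$; for $R_j=2^j$, say, $u=u_d\to\infty$ and the constants you import are not uniform in $d$. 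This $u$-dependence is not an artifact of the proof: by Theorem \ref{t2.2} with $d=1$, $\lim_n n^{g({\bf R})}a_n=2^{R_1}$, so any bound of the form $a_n\le C\,d^{-1/2}n^{-g({\bf R})}$ forces $C\gtrsim 2^{u}$. Consequently, inverting the asymptotic part of Theorem \ref{t2.1} only yields $\log n(\varepsilon,d)\lesssim \frac dv\bigl(\log C_{u_d,v}+\log(1/\varepsilon)\bigr)$, which for unbounded $(R_j)$ is far weaker than the $u$-free bound $\log n(\varepsilon,d)\le C_v\,d\log(1/\varepsilon+1)$ your sufficiency bookkeeping requires; the same non-uniformity infects the lower constant you take from Theorem \ref{t2.1} in your necessity step for $W_2^{\bf R}$. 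The repair is precisely the paper's device: sandwich via the norm-one embeddings $W_2^\infty(\Bbb T^d)\hookrightarrow W_2^{\bf R}(\Bbb T^d)\hookrightarrow H^{q,2q}(\Bbb T^d)$ with $q=\inf_j R_j$ fixed once and for all, so that every upper bound comes from \eqref{2.3.1} for the single space $H^{q,2q}$ (constants depending only on $q$) and every lower bound comes from $W_2^\infty$ via Theorem \ref{t2.2-1} (absolute constants, or the exact value $a_{3^d}=(1+d)^{-1/2}$ as in the paper). With that substitution your two claimed estimates on $\log n(\varepsilon,d)$ become correct and uniform in $d$, and the rest of your scheme goes through.

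A secondary slip in the sufficiency bookkeeping: the regime $\varepsilon\ge c/\sqrt d$ means $d\ge c^2\varepsilon^{-2}$, so there $\log d$ is bounded \emph{below}, not above, by $2\log(1/\varepsilon)+O(1)$; your choice $\theta=1$ for the case $\alpha>2$ therefore invokes an inequality in the wrong direction. Taking instead $\theta=(2+\delta)/\alpha<1$ gives the ratio bound $(1+\log d)\,\varepsilon^{\delta}d^{-\beta(1-\theta)}\to0$, which closes that case. Your necessity computation along $\varepsilon_d\asymp d^{-1/2}$ (with $\varepsilon_d^{-\alpha}+d^\beta\le 2d$ for $\alpha\le2$, $\beta\le1$), the intractability at $\alpha=\beta=1$, and the polynomial-in-$d$ bound excluding the curse are fine once the lower bound is sourced from $W_2^\infty$ as above; this necessity argument is essentially the paper's own.
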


\begin{rem}In \cite{KSU1}, the authors considered the tractability of the
isotropic Sobolev embeddings
\begin{equation}\label{e1.2}I_d:H^{s,r}(\Bbb T^d)\rightarrow
L_2(\Bbb T^d)
\end{equation} for the cases $r=1,\ r=2$, and $r=2s$, and obtained the
following results.

(1) For every $s > 0$,  none of the above mentioned approximation
problems
 \eqref{e1.2} is quasi-polynomially
tractable, and suffers from the curse of dimensionality.

(2) For  $r=1$, \eqref{e1.2} is weakly tractable if $s>1$,
 intractable if $0<s\leq 1$.

(3) For $r=2s$, \eqref{e1.2} is intractable for all $s>0$.

(4) For  $r=2$, \eqref{e1.2} is weakly tractable if $s>2$,
intractable if $0<s\leq 1$, and remains open if $1<s\le 2$.
Specially, the classical Sobolev embedding $I_d : H^{m}(\Bbb
T^d)\to L_2(\Bbb T^d)$ is weakly tractable if $m\ge 3$,
intractable if $m=1$, and  remains open if $m=2$.

 We remark that the above open problem was solved
already in several papers \cite{SW}, \cite{KMU}, and \cite{WW}
(via  a different technique).

In \cite[Theorem 4.1]{SW}, the authors obtained the
$(\alpha,\beta)$-weak tractability of the approximation problem
\eqref{e1.2} with $r=1, r=2s$, and $r=2$. Combining with
\cite[Remark 7.6]{KMU}, we can see easily that  the approximation
problem \eqref{e1.2} is $(\alpha,\beta)$-weakly tractable for
$\az>r/s$ and $\beta
> 0$ or $\az>0$ and $\beta>1$.

 \end{rem}

\begin{rem}It is an interesting problem about the tractability of the classical anisotropic Sobolev embedding problem $I_d\:: H_2^{\bf m}(\Bbb T^d)\rightarrow
L_2(\Bbb T^d),\ {\bf m}\in \Bbb N_+^d$ for $k=\sup_{1\le
j<\infty}m_j<+\infty$. Since both the embeddings $I_d: H^k(\Bbb
T^d)\to H_2^{\bf m}(\Bbb T^d)$ and  $I_d:  H_2^{\bf m}(\Bbb
T^d)\to H^1(\Bbb T^d)$ have the norm $1$,  it follows from
\cite{KSU1, KMU} that the approximation problem $I_d\:: H_2^{\bf
m}(\Bbb T^d)\rightarrow L_2(\Bbb T^d)$ is not uniformly weakly
tractable and does not suffer from the curse of dimensionality.
Concerning with the weak tractability, we conjecture that the
approximation problem $I_d\:: H_2^{\bf m}(\Bbb T^d)\rightarrow
L_2(\Bbb T^d)$ is weakly tractable if $\liminf\limits_{d\to\infty}
d\, g({\bf m})>2$, and intractable if $\liminf\limits_{d\to\infty}
d\, g({\bf m})\le 2$. It is an open problem.
\end{rem}

\section{Proofs of Theorems \ref{t2.1}, \ref{t2.2}, \ref{t2.2-1}, and \ref{t2.3}}

For  ${\bf R}=(R_1,...,R_d)\in \Bbb R_+^d$, let $\{\omega _{{\bf
R},d}^*({l})\}_{l=1}^\infty$ be the non-increasing rearrangement
of
$$\Big\{(1+\sum_{j=1}^{d}|k_j|^{2R_j})^{-1/2}\Big\}_{{\bf k}\in \Bbb
Z^d}.$$According to the results about approximation numbers of
diagonal operators (see e.g. \cite[Lemma 2.4]{KSU1}, or
\cite[Corollary 4.12]{NW1}), we get
$$a_{n}(I_d:W_2^{\bf R}(\Bbb T^d)\to L_2(\Bbb T^d))=\omega _{{\bf R},d}^*(n).$$

For $m\in \Bbb N$, denote by $C(m,{\bf R},d)$ the cardinality of
the set
$$\Big\{\k \ \big|\ \sum_{j=1}^{d}|k_j|^{2R_j}\le m^{2p},\ {\bf k}\in \Bbb
Z^d\Big\},$$where $p=\max\{1/2,\,R_1,\dots,R_d\}$. Then  for
$C(m-1,{\bf R},d)< n\le C(m,{\bf R},d),$ we have
$$(1+m^{2p})^{-1/2}\le a_{n}(I_d : W_2^{\bf R}(\Bbb T^d)\rightarrow L_2(\Bbb T^d))\le (1+(m-1)^{2p})^{-1/2}.$$

The following lemma gives the relation of $\mathrm{vol}(B^d_{\bf
R}(t))\ (t>0)$ and $\mathrm{vol}(B^d_{\bf R})$.
\begin{lem}
Let ${\bf R}=(R_1,...,R_d)\in \Bbb R_+^d,\ t>0$. Then
\begin{equation}\label{4.1}
\mathrm{vol}(B^d_{\bf
R}(t))=t^{1/R_1+...+1/R_d}\,\mathrm{vol}(B^d_{\bf R}).
\end{equation}
\end{lem}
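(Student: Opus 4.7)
The plan is to prove the scaling identity by a straightforward change of variables that anisotropically rescales each coordinate to turn $B_{\bf R}^d(t)$ into $B_{\bf R}^d = B_{\bf R}^d(1)$. Write
\[
\mathrm{vol}(B_{\bf R}^d(t))=\int_{\{\x\in\RR^d:\sum_{j=1}^d|x_j|^{R_j}\le t\}}dx_1\cdots dx_d.
\]
The substitution I would use is $x_j=t^{1/R_j}y_j$ for $j=1,\dots,d$; then $|x_j|^{R_j}=t\,|y_j|^{R_j}$, so the constraint $\sum_j|x_j|^{R_j}\le t$ becomes exactly $\sum_j|y_j|^{R_j}\le 1$, i.e., $\yb\in B_{\bf R}^d$.

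Next, I would compute the Jacobian of this diagonal change of variables. Since it is diagonal, the Jacobian determinant is simply the product of the scaling factors:
\[
\Big|\frac{\partial(x_1,\dots,x_d)}{\partial(y_1,\dots,y_d)}\Big|=\prod_{j=1}^{d}t^{1/R_j}=t^{1/R_1+\cdots+1/R_d}.
\]
Substituting into the volume integral gives
\[
\mathrm{vol}(B_{\bf R}^d(t))=t^{1/R_1+\cdots+1/R_d}\int_{B_{\bf R}^d}dy_1\cdots dy_d=t^{1/R_1+\cdots+1/R_d}\,\mathrm{vol}(B_{\bf R}^d),
\]
which is the claimed identity.

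There is no real obstacle here; the lemma is a one-line change-of-variables computation, and the only thing to check carefully is that the sign $t>0$ makes the substitution $x_j=t^{1/R_j}y_j$ well defined (so the map is a diffeomorphism of $\RR^d$ onto itself with constant Jacobian). In the write-up I would simply present the substitution, record the Jacobian, and conclude. As a sanity check one may verify consistency with \eqref{2.9}: plugging $t=1$ recovers the trivial identity, and combining \eqref{4.1} with \eqref{2.9} yields
\[
\mathrm{vol}(B_{\bf R}^d(t))=2^d\,t^{1/R_1+\cdots+1/R_d}\,\frac{\Gamma(1+1/R_1)\cdots\Gamma(1+1/R_d)}{\Gamma(1+1/R_1+\cdots+1/R_d)},
\]
which agrees with the standard formula for the volume of generalized anisotropic balls from \cite{Wa}.
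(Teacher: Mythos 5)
Your proposal is correct and follows the same argument as the paper: both use the diagonal substitution $x_j=t^{1/R_j}y_j$ (the paper writes it as $y_j=x_j t^{-1/R_j}$), note the Jacobian determinant $t^{1/R_1+\cdots+1/R_d}$, and conclude by the change-of-variables formula. No differences worth noting.
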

\begin{proof}We make a change of variables
$$y_1=x_1 t^{-1/R_1},\ \dots,\ y_d=x_d\, t^{-1/R_d}$$ that deforms $B_{\bf R}^d(t)$ into $B_{\bf R}^d$. The Jacobian determinant is $J(\y)=t^{1/R_1+...+1/R_d}$.
By the change of variables formula we obtain
$$\mathrm{vol}(B^d_{\bf R}(t))=\int_{B^d_{\bf R}(t)}1\,d\x=\int_{B^d_{\bf R}}J(\y)
\,d\y=t^{1/R_1+...+1/R_d}\,\mathrm{vol}(B^d_{\bf R}), $$which completes the proof.\end{proof}

\begin{lem}\label{l4.2}
Let ${\bf R}=(R_1,...,R_d)\in \Bbb R_+^d,\ p=\max\{\frac{1}{2},\ R_1,\dots,R_d\}$. Then for any $x,\ y\in \Bbb R^d$, we have
\begin{equation}\label{4.2}
\big(\sum_{j=1}^d|x_j+y_j|^{2R_j}\big)^{1/(2p)}\le \big(\sum_{j=1}^d|x_j|^{2R_j}\big)^{1/(2p)}+\big(\sum_{j=1}^d|y_j|^{2R_j}\big)^{1/(2p)}
\end{equation}
\end{lem}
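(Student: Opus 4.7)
The plan is to reduce the inhomogeneous mixed-norm inequality to a classical Minkowski inequality via a clever substitution. First I would observe that, by the elementary triangle inequality $|x_j + y_j| \le |x_j| + |y_j|$ together with the monotonicity of $t \mapsto t^{2R_j}$ on $[0,\infty)$, it suffices to prove \eqref{4.2} for nonnegative vectors $x_j, y_j \ge 0$.

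Set $q_j = 2R_j$ and $q = 2p = 2\max\{1/2, R_1,\ldots,R_d\}$. By the choice of $p$, we have $q \ge 1$ and $q \ge q_j$ for every $j$, so $r_j := q/q_j \ge 1$. The key step is the substitution $u_j = x_j^{q_j/q}$, $v_j = y_j^{q_j/q}$, which turns the inhomogeneous weights into a homogeneous $\ell^q$ structure: one has $\sum_j u_j^q = \sum_j x_j^{q_j}$ and $\sum_j v_j^q = \sum_j y_j^{q_j}$. Moreover, $x_j + y_j = u_j^{r_j} + v_j^{r_j}$ after writing $x_j = u_j^{r_j}$, so
\[
(x_j+y_j)^{q_j} = \bigl((u_j^{r_j} + v_j^{r_j})^{1/r_j}\bigr)^q.
\]
Now I would invoke the elementary inequality $(a^r + b^r)^{1/r} \le a + b$ for $a,b \ge 0$ and $r \ge 1$ (immediate from $(a+b)^r \ge a^r + b^r$) with $a = u_j$, $b = v_j$, $r = r_j$, giving $(x_j+y_j)^{q_j} \le (u_j + v_j)^q$.

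Summing over $j$ and applying the classical Minkowski inequality in $\ell^q$ with exponent $q \ge 1$ to the vectors $(u_j)$ and $(v_j)$:
\[
\Bigl(\sum_{j=1}^d (u_j+v_j)^q\Bigr)^{1/q} \le \Bigl(\sum_{j=1}^d u_j^q\Bigr)^{1/q} + \Bigl(\sum_{j=1}^d v_j^q\Bigr)^{1/q}.
\]
Raising the previous pointwise estimate to the $1/q$-th power, chaining with Minkowski, and re-expressing $u_j^q, v_j^q$ back as $x_j^{q_j}, y_j^{q_j}$ yields exactly \eqref{4.2}.

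The main conceptual obstacle is spotting the substitution $u_j = x_j^{q_j/q}$, which trades the $j$-dependent exponents for a uniform exponent $q$ at the cost of an inner exponent $r_j \ge 1$ that is handled by the concavity-type bound $(a^{r_j}+b^{r_j})^{1/r_j} \le a+b$. Once the problem is reformulated in the homogeneous $\ell^q$ setting with $q \ge 1$, standard Minkowski closes the proof; the rest is bookkeeping, and the case $p = 1/2$ (i.e.\ $q = 1$, all $q_j \le 1$) is covered automatically since Minkowski with exponent $1$ is just the triangle inequality.
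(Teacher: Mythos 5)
Your proposal is correct and is essentially the paper's own argument in different notation: your substitution $u_j=x_j^{q_j/q}=|x_j|^{R_j/p}$ together with the bound $(a^{r_j}+b^{r_j})^{1/r_j}\le a+b$ reproduces exactly the paper's pointwise estimate $|x_j+y_j|^{2R_j}\le\big(|x_j|^{R_j/p}+|y_j|^{R_j/p}\big)^{2p}$ (obtained there from $(a+b)^q\le a^q+b^q$ for $0<q\le1$), after which both proofs finish with Minkowski's inequality in $\ell^{2p}$ applied to the vectors $\big(|x_j|^{R_j/p}\big)_j$ and $\big(|y_j|^{R_j/p}\big)_j$.
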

\begin{proof}
Using the inequality $$(a+b)^q\leq a^q+b^q,\ \ a,b\ \ge \ 0,\ 0<
q\ \leq 1,$$ we get that for $1\le j\le d$,
\begin{equation}\label{4.3}
\begin{split}
|x_j+y_j|^{2R_j}&\le \big((|x_j|+|y_j|)^{R_j/p}\big)^{2p}\le
(|x_j|^{R_j/p}+|y_j|^{R_j/p})^{2p}.
\end{split}
\end{equation}
It follows from \eqref{4.3} and the Minkowskii inequality that
\begin{align*}
\big(\sum_{j=1}^d|x_j+y_j|^{2R_j}\big)^{1/(2p)}&\le \big(\sum_{j=1}^d(|x_j|^{R_j/p}+|y_j|^{R_j/p})^{2p}\big)^{1/(2p)}\\
&\le \big(\sum_{j=1}^d|x_j|^{2R_j}\big)^{1/(2p)}+\big(\sum_{j=1}^d|y_j|^{2R_j}\big)^{1/(2p)}.
\end{align*}
Lemma \ref{l4.2} is proved.
\end{proof}

\begin{lem}(\cite{KSU1})\label{l3.3}
For $0\leq x<\infty $, it holds
\begin{align}\label{3.2}
\Big(\frac{x}{e}\Big)^x\leq \Gamma (1+x)\leq (1+x)^x.
\end{align}
\end{lem}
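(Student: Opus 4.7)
The plan is to prove the two inequalities $(x/e)^x\le\Gamma(1+x)$ and $\Gamma(1+x)\le(1+x)^x$ separately, using classical properties of the Gamma function.

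For the upper bound $\Gamma(1+x)\le(1+x)^x$, I would combine the trivial integer case $n!=\prod_{k=1}^{n}k\le(1+n)^n$ (each factor is at most $1+n$) with the log-convexity of $\Gamma$. Specifically, for $x\in[n,n+1]$ with $n\in\Bbb N$, convexity of $\log\Gamma$ yields
$$\log\Gamma(1+x)\le (n+1-x)\log n!+(x-n)\log(n+1)!,$$
and the integer bound $n!\le(1+n)^n$ together with the monotonicity $(1+n)\le (1+x)$ then reduces the right-hand side to $x\log(1+x)$.

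For the lower bound $(x/e)^x\le\Gamma(1+x)$, I would start from $\Gamma(1+x)=\int_{0}^{\infty} t^{x} e^{-t}\,dt$ and perform the substitution $t=x(1+u)$, obtaining
$$\Gamma(1+x)=(x/e)^x\cdot x\int_{-1}^{\infty}\bigl((1+u)e^{-u}\bigr)^{x}\,du,$$
which reduces the claim to $x\int_{-1}^{\infty}\bigl((1+u)e^{-u}\bigr)^{x}\,du\ge 1$. Since $(1+u)e^{-u}\le 1$ with equality at $u=0$ and with quadratic decay $\log\bigl((1+u)e^{-u}\bigr)=-u^{2}/2+O(u^{3})$ nearby, a quantitative Laplace argument (restricting integration to a neighborhood of $0$ on which $(1+u)e^{-u}\ge e^{-cu^{2}}$ for a suitable constant $c$) yields the desired bound for all $x$ above some explicit threshold $x_{0}$; on $[0,x_{0}]$ the inequality is checked by hand, using that both sides tend to $1$ as $x\to 0^{+}$.

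The main obstacle is the lower bound: the Laplace approximation naturally produces a Stirling-type prefactor of order $\sqrt{2\pi x}$, and recovering the clean constant $1$ on the right-hand side requires careful book-keeping together with a small-$x$ analysis. A more transparent alternative would be to appeal to the quantitative Stirling inequality $\Gamma(1+x)\ge\sqrt{2\pi x}\,(x/e)^{x}$ valid for $x\ge 1/(2\pi)$, combined with a direct verification of $\Gamma(1+x)\ge(x/e)^{x}$ on the short remaining interval.
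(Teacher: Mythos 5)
The paper itself does not prove this lemma; it is quoted from \cite{KSU1}, so your proposal has to stand on its own. Your upper bound is fine and complete as sketched: the interpolation $\log\Gamma(1+x)\le(n+1-x)\log n!+(x-n)\log (n+1)!=\log n!+(x-n)\log(n+1)$ via log-convexity, combined with $n!\le(n+1)^n$ and $n+1\le 1+x$ for $x\in[n,n+1]$, does give $\Gamma(1+x)\le(1+x)^x$.

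The lower bound, however, is a plan rather than a proof, and the places you wave at are exactly where the difficulty sits. Both the Laplace route ("yields the desired bound for all $x$ above some explicit threshold $x_0$; on $[0,x_0]$ the inequality is checked by hand") and the Stirling route ("direct verification on the short remaining interval") end with an unverified claim on a whole interval, and this is not a routine check: as $x\to 0^+$ both $(x/e)^x$ and $\Gamma(1+x)$ tend to $1$, so no crude comparison of a supremum of the left side with an infimum of the right side can work near $0$; one genuinely needs a pointwise estimate there, e.g. $(x/e)^x=x^xe^{-x}\le e^{-x}\le e^{-\gamma x}\le\Gamma(1+x)$ for $0\le x\le 1$, the last step by convexity of $\log\Gamma$ and $\psi(1)=-\gamma$. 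Moreover the quantitative inputs you lean on (a Laplace lower bound with explicit threshold, or $\Gamma(1+x)\ge\sqrt{2\pi x}\,(x/e)^x$) are themselves heavier than the statement being proved and would need proof or citation. Note finally that the lower bound has a one-line elementary proof that bypasses all of this: for $x>0$,
\begin{equation*}
\Gamma(1+x)=\int_0^\infty t^xe^{-t}\,dt\ \ge\ \int_x^\infty t^xe^{-t}\,dt\ \ge\ x^x\int_x^\infty e^{-t}\,dt\ =\ \Big(\frac{x}{e}\Big)^x ,
\end{equation*}
with the case $x=0$ trivial; replacing your Laplace/Stirling discussion by this argument closes the gap.
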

\ Note that $dg({\bf R})=\frac{d}{1/R_1+...+1/R_d}$ is just the
harmonic average of the $d$ positive numbers $R_1,\dots,R_d$ and
is between $R_1,\dots,R_d$ .
\begin{lem}\label{lem3.4}
Let ${\bf R}=(R_1,...,R_d)\in \Bbb R_+^d$,  $u=\max\{ R_1,\dots,R_d\}$, $v=\min\{ R_1,\dots,R_d\}$. Then for all $d\in \Bbb N_+$, we have
\begin{equation}\label{3.5}
2^v\sqrt{\frac{1}{e(d+2u)}}\leq (\mathrm{vol}(B_{2{\bf R}}^d))^{g({\bf R})}\leq 2^u\big(\frac{2v+1}{2v}\big)^{\frac{u}{2v}}\sqrt{\frac{2eu}{d}},
\end{equation}
\end{lem}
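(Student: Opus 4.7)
The plan is to combine the closed form for $\mathrm{vol}(B^d_{2{\bf R}})$ given by \eqref{2.9} with $R_j$ replaced by $2R_j$, namely
\[
\mathrm{vol}(B^d_{2{\bf R}}) = 2^d\,\frac{\prod_{j=1}^d \Gamma\!\bigl(1+\tfrac{1}{2R_j}\bigr)}{\Gamma\!\bigl(1+\tfrac{1}{2g({\bf R})}\bigr)},
\]
with the two-sided Gamma bounds of \lemref{l3.3}. Two observations are used throughout: first, $dg({\bf R})$ is the harmonic mean of $R_1,\dots,R_d$, so $v\le dg({\bf R})\le u$ and in particular $g({\bf R})\le u/d$; second, $\sum_{j=1}^d 1/(2R_j)=1/(2g({\bf R}))$, which causes the exponents $g({\bf R})/(2R_j)$ to sum to exactly $1/2$ after one raises to the power $g({\bf R})$.

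For the upper bound I would apply $\Gamma(1+x)\le(1+x)^x$ to each numerator factor and $\Gamma(1+x)\ge(x/e)^x$ to the denominator; raising to the power $g({\bf R})$ then produces
\[
\mathrm{vol}(B^d_{2{\bf R}})^{g({\bf R})} \le 2^{dg({\bf R})}\prod_{j=1}^d\Bigl(1+\tfrac{1}{2R_j}\Bigr)^{g({\bf R})/(2R_j)}(2eg({\bf R}))^{1/2}.
\]
The first factor is $\le 2^u$ and the last is $\le\sqrt{2eu/d}$ via $g({\bf R})\le u/d$. For the middle product I would bound each base by $(2v+1)/(2v)$ using $R_j\ge v$, and each exponent $g({\bf R})/(2R_j)$ by $u/(2dv)$ using $g({\bf R})\le u/d$ and $R_j\ge v$; multiplying over $j$ collapses this factor to $((2v+1)/(2v))^{u/(2v)}$, finishing the upper bound.

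For the lower bound I would reverse the Gamma estimates, giving
\[
\mathrm{vol}(B^d_{2{\bf R}})^{g({\bf R})} \ge 2^{dg({\bf R})}\prod_{j=1}^d\Bigl(\tfrac{1}{2eR_j}\Bigr)^{g({\bf R})/(2R_j)}\Bigl(1+\tfrac{1}{2g({\bf R})}\Bigr)^{-1/2}.
\]
The key step is to introduce the weights $c_j:=g({\bf R})/R_j$, which satisfy $\sum_j c_j=1$, so that $\tfrac{1}{2eR_j}=\tfrac{c_j}{2eg({\bf R})}$ and $g({\bf R})/(2R_j)=c_j/2$; the middle product then factors as $(2eg({\bf R}))^{-1/2}\prod_j c_j^{c_j/2}$. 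Applying the entropy inequality $\prod_j c_j^{c_j}\ge 1/d$, which is Jensen's inequality for the convex function $x\mapsto x\log x$ on the probability simplex, bounds this product below by $1/\sqrt{d}$. Assembling the pieces gives an estimate of the form $2^{dg({\bf R})}/\sqrt{ed(2g({\bf R})+1)}$, and the inequalities $dg({\bf R})\ge v$ together with $2g({\bf R})+1\le(d+2u)/d$ then yield the desired bound $2^v\sqrt{1/(e(d+2u))}$.

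The main obstacle is the lower bound: the product $\prod_j(1/(2eR_j))^{g({\bf R})/(2R_j)}$ has factors that may be either $>1$ or $<1$, and any crude coordinatewise estimate using only $R_j\in[v,u]$ becomes useless when $v$ is small. Recognizing the natural probabilistic structure $\{c_j\}$ and invoking the Shannon-type entropy bound is what simultaneously cancels the $v$-dependence and produces the correct dimensional factor $1/\sqrt{d}$; everything else reduces to bookkeeping with the harmonic-mean inequalities $v\le dg({\bf R})\le u$ and $v/d\le g({\bf R})\le u/d$.
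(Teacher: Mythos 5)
Your proposal is correct, and its skeleton coincides with the paper's: the closed formula \eqref{2.9} applied to $2{\bf R}$, the two-sided estimates of Lemma~\ref{l3.3}, and the harmonic-mean inequalities $v\le dg({\bf R})\le u$ to remove the ${\bf R}$-dependence at the end. The upper bound is essentially the paper's argument (the paper bounds the numerator product by monotonicity of $(1+x)^x$ to get $(1+\tfrac1{2v})^{dg({\bf R})/(2v)}$, you bound base and exponent of each factor separately; both land on $\bigl(\tfrac{2v+1}{2v}\bigr)^{u/(2v)}$). Where you genuinely differ is the lower bound on $\prod_j\Gamma\bigl(1+\tfrac{1}{2R_j}\bigr)$: the paper first invokes the convexity of $\ln\Gamma$ to replace this product by $\bigl(\Gamma(1+\tfrac{1}{2dg({\bf R})})\bigr)^d$ and only then applies the Stirling-type bound $\Gamma(1+x)\ge(x/e)^x$ once, whereas you apply that bound to every factor and then exploit the weights $c_j=g({\bf R})/R_j$ with $\sum_j c_j=1$, controlling the resulting product by the entropy inequality $\prod_j c_j^{c_j}\ge 1/d$ (Jensen for $x\ln x$ on the simplex). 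The two routes produce the identical intermediate estimate $2^{dg({\bf R})}\bigl(e(d+2dg({\bf R}))\bigr)^{-1/2}$, from which the final step ($2^{dg({\bf R})}\ge 2^v$ and $d+2dg({\bf R})\le d+2u$) is the same. What your version buys is that it uses nothing about $\Gamma$ beyond Lemma~\ref{l3.3}, the probabilistic structure of the exponents making the $v$-independence of the dimensional factor $d^{-1/2}$ transparent; what the paper's version buys is brevity, since log-convexity of $\Gamma$ collapses the whole product in one stroke and the Stirling-type bound is used only once.
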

\noindent where $g({\bf R})=\frac{1}{1/R_1+...+1/R_d}$, $v\leq
dg({\bf R})\leq u$.
\begin{proof}
By \eqref{2.9}, we get \begin{align*} \Big(\text{vol}(B^d_{2{\bf
R}})\Big)^{g({\bf R})}&=2^{dg({\bf R})}\frac{\Big(\Gamma(1 +
\frac{1}{2R_1})\,\cdots\,\Gamma(1 + \frac{1}{2R_d})\Big)^{g({\bf
R})}}{\Big(\Gamma(1 + \frac{1}{2R_1}+\cdots+\frac{1}{2R_d})
\Big)^{g({\bf R})}}
\\ &=2^{dg({\bf R})} {\Big(\Gamma(1 +
\frac{1}{2R_1})\,\cdots\,\Gamma(1 + \frac{1}{2R_d})\Big)^{g({\bf
R})}} {\Big(\Gamma (1 + \frac{1}{2g({\bf R})})\Big)^{-g({\bf R})}}
.
\end{align*}
From Lemma \ref{l3.3}, we have
\begin{align}\label{3.6}
\Big(\Gamma \big(1 + \frac{1}{2g({\bf R})}\big)\Big)^{-g({\bf R})}
\leq \Big (\frac{1}{2eg({\bf R})}\Big
)^{-\frac{1}{2}}=\Big(\frac{2edg({\bf R})}{d}\Big)^{\frac{1}{2}},
\end{align}
and
\begin{align}\label{3.7}
\Big(\Gamma \big(1 + \frac{1}{2g({\bf R})}\big)\Big )^{-g({\bf
R})}\geq \Big (1 + \frac{1}{2g({\bf R})}\Big )^{-\frac{1}{2}}
=\Big (\frac{2dg({\bf R})}{d + 2dg({\bf R})}\Big)^{\frac{1}{2}}.
\end{align}
It follows from Lemma \ref{l3.3} and the monotonicity of the
function $g(x)=(1+x)^x,\ x\geq 0$ that
\begin{align}\label{3.8}
\Big(\Gamma(1 + \frac{1}{2R_1})\cdots\Gamma(1 + \frac{1}{2R_d})\Big )^{g({\bf R})}&\leq
\Big((1 + \frac{1}{2R_1})^{\frac{1}{2R_1}}\cdots(1 + \frac{1}{2R_d})^{\frac{1}{2R_d}}\Big)^{g({\bf R})}\notag\\
&\leq (1+\frac{1}{2v})^{\frac{dg({\bf R})}{2v}}.
\end{align}
Next we use the convexity of  the function $\ln \Gamma (x), \ x>0$
to get  that  for $x_1,\dots,x_d\geq 0$,
$$\Gamma (1+x_1)\cdots \Gamma (1+x_d)\geq \big(\Gamma (1+\frac{x_1+\dots+x_d}{d})\big )^d.$$
It follows that
\begin{align}\label{3.9}
\Big(\Gamma(1 + \frac{1}{2R_1})\cdots\Gamma(1 +
\frac{1}{2R_d})\Big )^{g({\bf R})}&\geq \Big(\Gamma (1 +
\frac{1}{2dg({\bf R})})\Big)^{dg({\bf R})}\geq \big (\frac
{1}{2edg({\bf R})}\big )^{\frac{1}{2}}.
\end{align}
Then \eqref{3.6} and  \eqref{3.8} lead to
$$(\text{vol}(B^d_{2{\bf R}}))^{g({\bf R})}\leq 2^{dg({\bf R})}(1+\frac{1}{2v})^{\frac{dg({\bf R})}{2v}}\big(\frac{2edg({\bf R})}{d}\big)^{\frac{1}{2}},$$
and  \eqref{3.7} and \eqref{3.9} yield
$$(\text{vol}(B^d_{2{\bf R}}))^{g({\bf R})}\geq 2^{dg({\bf R})}\big(\frac{1}{e(d+2dg({\bf R}))}\big)^{\frac{1}{2}}.$$
Hence,     \eqref{3.5} follows from the two above inequalities and
$v\leq dg({\bf R})\leq u$ immediately.   Lemma \ref{lem3.4} is
proved.
\end{proof}

\begin{lem}\label{l3.5}
Let ${\bf R}=(R_1,...,R_d)\in \Bbb R_+^d$,
$u=\max\{R_1,\dots,R_d\}$, $\ v=\min\{ R_1,\dots,R_d\}$,
$p=\max\{\frac{1}{2},\ u\}$, $g({\bf
R})=\frac{1}{1/R_1+...+1/R_d}$.
 Then
 for $n> E^d$,  we have
\begin{equation} \label{2.4}n^{g({\bf R})}a_n(I_d\:: W_2^{\bf R}(\Bbb T^d)\rightarrow
L_2(\Bbb T^d))\leq
2^{(p+u)}(1+\frac{2v+1}{2v})^{\frac{u}{2v}}\sqrt{\frac{2eu}{d}},\end{equation}
and  \begin{equation}\label{2.5}n^{g({\bf R})}a_n(I_d\:: W_2^{\bf
R}(\Bbb T^d)\rightarrow L_2(\Bbb T^d))\geq
2^{(v-p)}\sqrt{\frac{1}{e(d+2u)}},\end{equation}where $E:=
4^{p/v}2^{u/v}(1+1/(2v))^{u/(2v^2)}(2ep)^{1/(2v)}$ is a positive
constant depending only on $u$ and $v$.
\end{lem}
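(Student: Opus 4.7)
\emph{Approach.} My plan uses the identity $a_n(I_d : W_2^{\bf R}(\Bbb T^d)\to L_2(\Bbb T^d)) = (1+\sigma_n)^{-1/2}$, where $\sigma_n$ denotes the $n$-th smallest element of the family $\{\sum_{j=1}^d |k_j|^{2R_j}\}_{{\bf k}\in \Bbb Z^d}$, as recorded just after Lemma \ref{l4.2}. The task reduces to sandwiching the counting function $C(m,{\bf R},d)$ between the volumes of two slightly dilated generalized balls (via a lattice/volume-covering argument in which Lemma \ref{l4.2} plays the role of the missing triangle inequality), and then inverting this sandwich. The two-sided estimates of Lemma \ref{lem3.4} for $(\mathrm{vol}(B^d_{2{\bf R}}))^{g({\bf R})}$ then convert the outcome into the explicit constants displayed in \eqref{2.4} and \eqref{2.5}.

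\emph{Volume sandwich and inversion.} I would assign to each ${\bf k} \in \Bbb Z^d$ the unit cube $Q_{\bf k} := {\bf k} + [-\tfrac12,\tfrac12]^d$, so that the cubes tile $\Bbb R^d$. For ${\bf y} \in Q_{\bf 0}$ one has $|y_j|^{2R_j} \le 2^{-2R_j} \le 2^{-2v}$ since $R_j \ge v$, so Lemma \ref{l4.2} gives
\[
\Big(\sum_{j=1}^d |k_j+y_j|^{2R_j}\Big)^{1/(2p)} \le \Big(\sum_{j=1}^d |k_j|^{2R_j}\Big)^{1/(2p)} + A, \qquad A := (d/4^v)^{1/(2p)}.
\]
Consequently the union of cubes $Q_{\bf k}$ over centers counted in $C(m,{\bf R},d)$ is contained in $B^d_{2{\bf R}}((m+A)^{2p})$, while conversely every point of $B^d_{2{\bf R}}((m-A)^{2p})$ belongs to some $Q_{\bf k}$ whose center is so counted. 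Combined with the scaling identity \eqref{4.1}, this yields
\[
(m-A)^{p/g({\bf R})}\,\mathrm{vol}(B^d_{2{\bf R}}) \le C(m,{\bf R},d) \le (m+A)^{p/g({\bf R})}\,\mathrm{vol}(B^d_{2{\bf R}}).
\]
Given $n$, let $m$ be determined by $C(m-1,{\bf R},d) < n \le C(m,{\bf R},d)$, so $(1+m^{2p})^{-1/2} \le a_n \le (1+(m-1)^{2p})^{-1/2}$. Writing $V := (\mathrm{vol}(B^d_{2{\bf R}}))^{g({\bf R})}$, solving the upper (resp.\ lower) half of the sandwich for $m$ (resp.\ $m-1$) and simplifying yields
\[
n^{g({\bf R})} a_n \le V\big(1 - (A+1)V^{1/p} n^{-g({\bf R})/p}\big)^{-p}, \quad n^{g({\bf R})} a_n \ge \frac{V}{\sqrt{2}}\big(1 + (A+1)V^{1/p} n^{-g({\bf R})/p}\big)^{-p}.
\]

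\emph{Main obstacle.} It remains to choose $E$ so that, for every $n > E^d$, the quantity $(A+1) V^{1/p} n^{-g({\bf R})/p}$ is bounded by a fixed constant (say $\tfrac12$); the correction factors $(1 \pm \cdot)^{-p}$ are then controlled by a constant depending only on $p$, and the stated form follows by inserting Lemma \ref{lem3.4}'s estimates for $V$. The delicate part is the bookkeeping of the threshold: since the exponent $p/g({\bf R})$ can be as large as $dp/v$, after taking the required $d$-th root the threshold picks up factors of the form $4^{p/v}$, $2^{u/v}$, $(1+1/(2v))^{u/(2v^2)}$ and $(2ep)^{1/(2v)}$, which are exactly those appearing in $E$. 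This exponential bookkeeping, rather than any new analytic ingredient, is the main difficulty.
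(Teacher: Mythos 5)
Your route is essentially the paper's: the same unit-cube tiling, the same use of Lemma \ref{l4.2} as a surrogate triangle inequality to sandwich $C(m,{\bf R},d)$ between volumes of dilated balls, the scaling identity \eqref{4.1}, and Lemma \ref{lem3.4} to turn $V=(\mathrm{vol}(B^d_{2{\bf R}}))^{g({\bf R})}$ into explicit constants; your $A=(d\,4^{-v})^{1/(2p)}$ just replaces the paper's $b_{\bf R}=\big(\sum_{j=1}^d 2^{-2R_j}\big)^{1/(2p)}\le A$, and your algebraic inversion with the condition $(A+1)V^{1/p}n^{-g({\bf R})/p}\le\frac12$ plays the role of the paper's explicit choices of $m_0,m_1$. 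The bookkeeping you defer does go through and yields exactly the stated $E$: since $2(A+1)\le 4d^{1/(2p)}$, your threshold is at most $(4d^{1/(2p)})^{p/g({\bf R})}\,\mathrm{vol}(B^d_{2{\bf R}})$, the power $d^{1/(2g({\bf R}))}$ cancels against the $d^{-1/(2g({\bf R}))}$ coming from Lemma \ref{lem3.4}, and then $u\le p$ and $dg({\bf R})\ge v$ give the bound $E^d$; so that part, though unexecuted, is not a real obstacle.

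The genuine defect is in the lower bound \eqref{2.5}. Your finishing step uses $(1+m^{2p})^{-1/2}\ge \frac{1}{\sqrt2}\,m^{-p}$, so with the $\frac12$-threshold you end with $n^{g({\bf R})}a_n\ge \frac{V}{\sqrt2}\big(\frac32\big)^{-p}\ge \frac{(2/3)^p}{\sqrt2}\,2^{v}\big(e(d+2u)\big)^{-1/2}$, which dominates the claimed constant $2^{v-p}\big(e(d+2u)\big)^{-1/2}$ only when $(4/3)^p\ge\sqrt2$, i.e. $p\ge\log_{4/3}\sqrt2\approx1.205$. For small $p=\max\{1/2,u\}$ (e.g. all $R_j\le 1/2$, so $p=1/2$) your constant is strictly smaller than the one stated in the lemma, and shrinking the threshold parameter $\frac12$ to compensate would push the admissible range of $n$ beyond the stated $E^d$. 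The repair is the paper's own device: since $2p\ge1$ one has $1+m^{2p}\le(m+1)^{2p}$, hence $a_n\ge(m+1)^{-p}$ with no $\sqrt2$ loss, giving $n^{g({\bf R})}a_n\ge V\big(1+(A+2)V^{1/p}n^{-g({\bf R})/p}\big)^{-p}\ge 2^{-p}V$ as soon as $(A+2)V^{1/p}n^{-g({\bf R})/p}\le1$, which again holds for $n>E^d$ because $A+2\le 4d^{1/(2p)}$. With that single change (your $\frac12$-threshold already yields $2^pV$ and hence \eqref{2.4}), your argument proves the lemma with the stated constants.
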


\begin{proof}
For any $m\in \Bbb N_+$, let $Q_{\bf k}$ be a cube with center
${\bf k}$, sides parallel to the axes and side-length 1. It
follows from \eqref{4.2} that
\begin{align*}
&B_{2{\bf
R}}^d\Big(\big(m-\big(\sum_{j=1}^{d}\big(\frac{1}{2}\big)^{2R_j}\big)^{1/(2p)}\big)^{2p}_+\Big)\subset
\bigcup_{\substack{{\bf k}\in\Bbb Z^d\\\sum_{j=1}^{d}|k_j|^{2R_j}\leq m^{2p}}}Q_{\bf k}\\
&\qquad \subset B_{2{\bf
R}}^d\Big(\big(m+\big(\sum_{j=1}^{d}\big(\frac{1}{2}\big)^{2R_j}\big)^{1/(2p)}\big)^{2p}\Big),
\end{align*}where $a_+$ is equal to $a$ if $a\ge 0$ and $0$ if
$a<0$. Note that the volume of the set
$$\bigcup\limits_{\substack{{\bf k}\in\Bbb
Z^d\\\sum_{j=1}^{d}|k_j|^{2R_j}\leq m^{2p}}}Q_{\bf k}$$ is just
$C(m,{\bf R},d)$. By \eqref{4.1} we get
\begin{align}
&\big(m-\big(\sum_{j=1}^{d}\big(\frac{1}{2}\big)^{2R_j}\big)^{1/(2p)}\big)^{p/g({\bf R})}_+\mathrm{vol}(B_{2{\bf R}}^d) \leq C(m,{\bf R},d)\notag\\
&\qquad\leq
\big(m+\big(\sum_{j=1}^{d}\big(\frac{1}{2}\big)^{2R_j}\big)^{1/(2p)}\big)^{p/g({\bf
R})}\mathrm{vol}(B_{2{\bf R}}^d). \label{4.4}
\end{align}
 We set
$$a_n(I_d):=a_n(I_d :W_2^{\bf R}(\Bbb T^d)\rightarrow L_2(\Bbb
T^d))\ \ {\rm and}\ \ b_{\bf
R}:=\big(\sum_{j=1}^{d}\big(\frac{1}{2}\big)^{2R_j}\big)^{1/(2p)}.$$
We also set  $$A(m,{\bf R},d):=(m+b_{{\bf R}})^{p/g({\bf
R})}\mathrm{vol}(B_{2{\bf R}}^d)$$ {\rm and} $$ B(m,{\bf
R},d):=(m-b_{{\bf R}})_+^{p/g({\bf R})}\mathrm{vol}(B_{2{\bf
R}}^d).$$

First we estimate the upper bound for $n^{g({\bf R})}a_n(I_d)$. We
fixed $m_0\in \Bbb N$ such that $1+b_{{\bf R}}\leq m_0\leq
2+b_{{\bf R}}.$ For all $m\geq m_0$, suppose that
$$A(m,{\bf R},d)<n\leq A(m+1,{\bf R},d).$$ By \eqref{4.4} we have
$n>C(m,{\bf R},d)$, which implies $a_n(I_d)\leq
(1+m^{2p})^{-\frac{1}{2}}.$ It follows that
\begin{align*}
n^{g({\bf R})}a_n(I_d)&\leq \frac{(A(m+1,{\bf R},d))^{g({\bf
R})}}{(1+m^{2p})^{\frac{1}{2}}}
=\frac{{(m+1+b_{\bf R})}^{p}(\mathrm{vol}(B_{2{\bf R}}^d))^{g({\bf R})}}{(1+m^{2p})^{\frac{1}{2}}}\\
&\leq \frac{{(m+1+b_{\bf R})}^{p}(\mathrm{vol}(B_{2{\bf
R}}^d))^{g({\bf R})}}{m^{p}}\leq 2^p(\mathrm{vol}(B_{2{\bf
R}}^d))^{g({\bf R})}.
\end{align*}
By \eqref{3.5} we get further
$$n^{g({\bf R})}a_n(I_d)\leq
2^{(p+u)}(\frac{2v+1}{2v})^{\frac{u}{2v}}\sqrt{\frac{2eu}{d}},$$
which proves  \eqref{2.4} for large enough $n>A(m_0,{\bf R},d).$
 Note that $b_{\bf R}\le d^{1/(2p)}$. Using \eqref{3.5} we get that
\begin{align*}A(m_0,{\bf R},d)&\le (2+2b_{\bf R})^{p/g({\bf R})}\,\mathrm{vol}(B_{2{\bf
R}}^d)\le  (4d^{1/(2p)})^{p/g({\bf R})}\,\mathrm{vol}(B_{2{\bf
R}}^d)\\& \le (4d^{1/(2p)})^{p/g({\bf
R})}\Big(2^u(1+1/(2v))^{u/(2v)}(2eu)^{1/2}d^{-1/2}\Big)^{1/g({\bf
R})}\\&\le \Big(4^p2^u
(1+1/(2v))^{u/(2v)}(2ep)^{1/2}\Big)^{d/(dg({\bf R}))}\\&\le
\Big(4^{p/v}2^{u/v}(1+1/(2v))^{u/(2v^2)}(2ep)^{1/(2v)}\Big)^d=E^d.
\end{align*}
Hence, \eqref{2.4} holds for $n> E^d $.

 Next we show the lower bound.  Choose $m_1\in \Bbb N$ such that $2+2b_{\bf R}\leq
m_1\leq 3+2b_{\bf R}$. For $m\geq m_1$ and $$B(m,{\bf R},d)< n\le
B(m+1,{\bf R},d),$$ by \eqref{4.4} we have $n\leq C(m+1,{\bf
R},d),$ which means $a_n(I_d)\geq (1+(m+1)^{2p})^{-\frac{1}{2}}.$
It follows that
\begin{align*}
n^{g({\bf R})}a_n(I_d)&\geq \frac{(B(m,{\bf R},d))^{g({\bf
R})}}{(1+(m+1)^{2p})^{\frac{1}{2}}}
=\frac{{(m-b_{\bf R})}^{p}(\mathrm{vol}(B_{2{\bf R}}^d))^{g({\bf R})}}{(1+(m+1)^{2p})^{\frac{1}{2}}}\\
&\geq \frac{{(m-b_{\bf R})}^{p}(\mathrm{vol}(B_{2{\bf
R}}^d))^{g({\bf R})}}{(m+2)^{p}}\geq
\big(\frac{2+b_{\bf R}}{4+2b_{\bf R}}\big)^p(\mathrm{vol}(B_{2{\bf R}}^d))^{g({\bf R})}\\
&=2^{-p}\mathrm{vol}(B_{2{\bf R}}^d))^{g({\bf R})}\ge
2^{(v-p)}(e(d+2u))^{-1/2},
\end{align*}
where in the last second inequality we used the monotonicity of
the function $\frac{x-b_{\bf R}}{x+2},\ x\ge 0$, and in the last
inequality we used \eqref{3.5}. Hence \eqref{2.5} holds for $n>
B(m_1,{\bf R},d)$. Similarly, we have
\begin{align*}B(m_1,{\bf R},d)&\le (3+b_{\bf R})^{p/g({\bf R})}\,\mathrm{vol}(B_{2{\bf
R}}^d)\le  (4d^{1/(2p)})^{p/g({\bf R})}\,\mathrm{vol}(B_{2{\bf
R}}^d)\\&\le
\Big(4^{p/v}2^{u/v}(1+1/(2v))^{u/(2v^2)}(2ep)^{1/(2v)}\Big)^d=E^d.
\end{align*}
This means that \eqref{2.5} holds for $n> E^d$.
 The proof of Lemma \ref{l3.5} is completed.
\end{proof}

\begin{rem}Let ${\bf R}=(R_1,\dots,R_d)\in\Bbb R_+^d$,  $u=\max\{R_1,\dots,R_d\}$,  $v=\min\{
R_1,\dots,R_d\}$, and  $g({\bf R})=\frac{1}{1/R_1+...+1/R_d}$.
Then for sufficiently large $n$ $(n>E^d)$, the above lemma
provides the two-sides inequalities
\begin{equation}\label{3.13-1}c_{u,v}d^{-1/2}n^{-g({\bf R})}\le a_n(I_d\:: W_2^{\bf
R}(\Bbb T^d)\rightarrow L_2(\Bbb T^d))\le
C_{u,v}d^{-1/2}n^{-g({\bf R})},\end{equation}where the constants
$c_{u,v}, C_{u,v}$ depend only on $u$ and $v$.
  Note that we captured the exact decay rate in $n$ and the
exact order of the constants with respect to $d$.
\end{rem}

\

\noindent{\it Proof of Theorem \ref{t2.1}.}

 We note that both the
embeddings
$$ W_2^{{\bf R}}(\Bbb T^d) \to H^{v,2v}(\Bbb T^d)\ \ \ {\rm and}\ \ \ H^{u,2u}(\Bbb T^d)\to W_2^{{\bf R}}(\Bbb T^d)$$ have norm
1. It follows that $$a_n(I_d :H^{u,2u}(\Bbb T^d)\rightarrow
L_2(\Bbb T^d))\le  a_n(I_d :W_2^{\bf R}(\Bbb T^d)\rightarrow
L_2(\Bbb T^d))\le  a_n(I_d :H^{v,2v}(\Bbb T^d)\rightarrow L_2(\Bbb
T^d)).$$  Noting $$ \frac{\log n} {\log(1+\frac{d}{\log n})}\asymp
d\ \ {\rm for}\ \ 2^d\le n\le 3^d,$$ and
$$n^{-u/d}\asymp_{u,v} n^{-v/d}\asymp_{u,v} n^{-g({\bf
R})}\asymp_{u,v}1 \ \ {\rm for}\ \ 3^d\le n\le E^d,$$ we obtain
from \eqref{2.3.1} that
\begin{align*}\label{2.3.1}
a_n(I_d: H^{u,2u}(\Bbb T^d)\rightarrow L_2(\Bbb T^d))&\asymp_{u,v}
a_n(I_d: H^{v,2v}(\Bbb T^d)\rightarrow L_2(\Bbb T^d))
\\ &\asymp_{u,v}\left\{\begin{matrix}
1, & \ \  1\le n\leq d,\\
 \Big(\frac{\log(1+\frac{d}{\log n})}{\log n}\Big)^{1/2},&\ \  d\le n\le 3^d, \\
 d^{-1/2},&\ \  3^d\le n\le E^d,
\end{matrix}\right.
\end{align*}
where $E$ is a positive constant given in Lemma \ref{l3.5} which
depends only on $u$ and $v$. We have
$$ a_n(I_d :W_2^{\bf R}(\Bbb T^d)\rightarrow
L_2(\Bbb T^d)) \asymp_{u,v}\left\{\begin{matrix}
1, & \ \  1\le n\leq d,\\
 \Big(\frac{\log(1+\frac{d}{\log n})}{\log n}\Big)^{1/2},&\ \  d\le n\le 3^d, \\
 d^{-1/2}n^{-g({\bf R})},&\ \  3^d\le n\le E^d,\end{matrix}\right.$$
which combining with \eqref{3.13-1}, yields \eqref{2.6-1}. Theorem
\ref{t2.1} is proved. $\hfill\Box$

\

\noindent{\it Proof of Theorem \ref{t2.2}.}

 As in the proof of Lemma \ref{l3.5}, we set $$a_n(I_d):=a_n(I_d :W_2^{\bf R}(\Bbb T^d)\rightarrow L_2(\Bbb
T^d))\ \ {\rm and}\ \ b_{\bf
R}:=\big(\sum_{j=1}^{d}\big(\frac{1}{2}\big)^{2R_j}\big)^{1/(2p)}.$$
For $C(m-1,{\bf R},d)< n\le C(m,{\bf R},d)$, we have
$$(1+m^{2p})^{-1/2}\le a_n(I_d)\le (1+(m-1)^{2p})^{-1/2}.$$
It follows that
\begin{equation}\label{4.5}
n\cdot\big(a_n(I_d ))^{1/g({\bf R})}\le C(m,{\bf
R},d)(1+(m-1)^{2p})^{-1/(2g({\bf R}))},\end{equation}and
\begin{equation}\label{4.6}n\cdot\big(a_n(I_d))^{1/g({\bf R})}>C(m-1,{\bf
R},d)(1+m^{2p})^{-1/(2g({\bf R}))}.
\end{equation}

On one side, from \eqref{4.4} and \eqref{4.5} we get
 $$n\cdot\big(a_n(I_d ))^{1/g({\bf R})}\le (1+(m-1)^{2p})^{-1/2g({\bf R})} \big(m+b_{\bf R}\big)^{p/g({\bf R})}\mathrm{vol}(B_{2{\bf R}}^d).$$
 Obviously, it holds that  $$\lim_{m\to \infty}(1+(m-1)^{2p})^{-1/2g({\bf R})} \big(m+b_{\bf R}\big)^{p/g({\bf R})}=1,$$
which implies that $$\lim_{n\to \infty}n^{g({\bf R})}a_n(I_d )\le
(\mathrm{vol}(B_{2{\bf R}}^d))^{g({\bf R})}.$$

On the other side, \eqref{4.4} and \eqref{4.6} lead to
$$n\cdot\big(a_n(I_d ))^{1/g({\bf R})}>(1+m^{2p})^{-1/2g({\bf R})} \big(m-b_{\bf R}\big)_+^{p/g({\bf R})}\mathrm{vol}(B_{2{\bf R}}^d).$$
Since $$\lim_{m\to \infty}(1+m^{2p})^{-1/2g({\bf R})}
\big(m-b_{\bf R}\big)_+^{p/g({\bf R})}=1,$$ we obtain
$$\lim_{n\to \infty}n^{g({\bf R})}a_n(I_d )\ge
(\mathrm{vol}(B_{2{\bf R}}^d))^{g({\bf R})}.$$ Theorem \ref{t2.2}
is proved. $\hfill\Box$

\

\noindent{\it Proof of Theorem \ref{t2.2-1}.}

First we note that $\dim \big(W_2^\infty(\Bbb T^d)\big)=3^d$,
which implies $$a_n(I_d:\ W_2^{\infty}(\Bbb T^d)\rightarrow
L_2(\Bbb T^d))=0$$ for $n>3^d$. Let $\{\oz_d^*(l)\}_{1\le l\le
3^d}$ be the non-increasing rearrangement of
$$\big\{(1+\sum_{j=1}^d|k_j|)^{-1/2}\big\}_{\k\in\{-1,0,1\}^d}.$$
Then we have for $1\le n\le 3^d$,
$$a_n(I_d:\ W_2^{\infty}(\Bbb T^d)\rightarrow
L_2(\Bbb T^d))=\oz_d^*(n).$$

For $m=0,1,\dots, d$, denote by $C(m,d)$ and $D(m,d)$ the
cardinalities of the sets
$$\big\{\k\in\{-1,0,1\}^d\ |\ \sum_{j=1}^d|k_j|\le m\big\}\ \ {\rm
and}\ \ \big\{\k\in\{-1,0,1\}^d\ |\ \sum_{j=1}^d|k_j|= m\big\}.$$
Then
$$ a_n(I_d:\ W_2^{\infty}(\Bbb T^d)\rightarrow
L_2(\Bbb T^d))=(1+m)^{-1/2}$$for $C(m-1,d)<n\le C(m,d),\
m=1,\dots,d$.
 It is easy to see that for $m=0,1,\dots,
d,$$$D(m,d)=2^m\binom dm \ \ \ {\rm and}\ \ \
C(m,d)=\sum_{j=0}^mD(j,d)=\sum_{j=0}^m 2^j\binom dj.$$

For $1\le n\le C(2,d)=2d^2+1$, we have
$$  3^{-1/2}\le a_n(I_d:\ W_2^\infty(\Bbb T^d)\rightarrow L_2(\Bbb
T^d))\le 1.$$This means that $$a_n(I_d:\ W_2^\infty(\Bbb
T^d)\rightarrow L_2(\Bbb T^d))\asymp1\asymp \left\{\begin{matrix}
1, & \ \  1\le n\leq d,\\
 \Big(\frac{\log(1+\frac{d}{\log n})}{\log n}\Big)^{1/2},&\ \  d\le n\le
 C(2,d).
\end{matrix}\right.$$

For  $C(m,d)<n\le 3^d, \ d/2\le m\le d$, we have
$$ d/2\le m\le \log C(m,d)\le \log n\le d\log 3,$$
and
\begin{align*} (1+d)^{-1/2}&= a_{3^d}(I_d:\ W_2^{\infty}(\Bbb T^d)\rightarrow L_2(\Bbb T^d))\\&\le
a_{n}(I_d:\ W_2^{\infty}(\Bbb T^d)\rightarrow L_2(\Bbb T^d))\\&\le
(2+m)^{-1/2}\le (2+d/2)^{-1/2},\end{align*}which implies that for
$C(m,d)<n\le 3^d, \ d/2\le m\le d$,
$$ a_{n}(I_d:\ W_2^{\infty}(\Bbb T^d)\rightarrow L_2(\Bbb
T^d))\asymp d^{-1/2}\asymp \Big(\frac{\log(1+\frac{d}{\log
n})}{\log n}\Big)^{1/2}.$$

For $C(m-1,d)<n\le C(m,d),\ 3\le m<d/2$, we have
\begin{equation}\label{3.15-11}a_n(I_d:\ W_2^{\infty}(\Bbb T^d)\rightarrow L_2(\Bbb
T^d))=(1+m)^{-1/2}.\end{equation} We note that
$$n\le \sum_{j=0}^m2^j\binom dj\le (m+1)2^m\binom dm\le 2^{2m}e^m(\frac dm)^m, $$
where in the above second inequality we used the inequality
$\binom dj\le \binom dm$ for $0\le j\le m<d/2$, in the above last
inequality we used the inequality (see \cite[(3.6)]{KSU1})
$$\binom{m+d}d\le e^{d-1}(1+m/d)^d\le e^d\Big(\frac{m+d}d\Big)^d.$$
 It follows that $ \log n\le m\log
(4ed/m), $ which implies
$$ m\ge \frac{\log n}{\log (4ed/m)}.$$
Using the inequalities $ \log n\le m\log (4ed/m) $ and $x\ge 2\log
x$ for $x\ge 2$, we obtain
$$\log \Big(\frac{4ed}{\log n}\Big)\ge \log\Big(\frac{4ed}{m\log
(4ed/m)}\Big)=\log(\frac{4ed}m)-\log\Big(\log(\frac
{4ed}m)\Big)\ge \frac12\log(\frac{4ed}m).
$$
This yields \begin{equation}\label{3.16-11} m\ge \frac{\log
n}{2\log (4ed/(\log n))}.\end{equation} On other hand, using the
inequality (see \cite[(3.5)]{KSU1})
$$\binom {m+d}m\ge \max \Big\{\Big(\frac{d+m}m\Big)^m,\,
\Big(\frac{d+m}d\Big)^d\Big\},$$ we have
$$n>C(m-1,d)\ge 2^{m-1}\binom d{m-1}\ge 2^{m-1} (\frac
d{m-1})^{m-1}.$$ This yields
$$m-1\le\frac{ \log n}{\log \big(\frac {2d}{m-1}\big)}\le\log n.$$
It follows that
$$m\le \frac{ \log n}{\log \big(\frac {2d}{\log n}\big)}+1,$$
which combining with \eqref{3.16-11} and \eqref{3.15-11}, leads to
$$ a_n(I_d:\ W_2^{\infty}(\Bbb T^d)\rightarrow L_2(\Bbb
T^d))\asymp m^{-1/2}\asymp \Big( \frac{ \log n}{\log \big(\frac
{2d}{\log
n}\big)}\Big)^{-1/2}\asymp\Big(\frac{\log(1+\frac{d}{\log
n})}{\log n}\Big)^{1/2}.$$ Theorem \ref{t2.2-1} is proved.
$\hfill\Box$

\begin{rem}The upper estimate of $a_n(I_d:\ W_2^{\infty}(\Bbb T^d)\rightarrow
L_2(\Bbb T^d))$ can also be obtained directly by \eqref{2.3.1} and
the inequality
$$a_n(I_d:\ W_2^{\infty}(\Bbb T^d)\rightarrow
L_2(\Bbb T^d))\le a_n(I_d:\ W_2^1(\Bbb T^d)\rightarrow L_2(\Bbb
T^d)).$$\end{rem} \

 \noindent{\it Proof of Theorem
\ref{t2.3}.}

We set $ q=\inf_{1\le j<\infty}R_j>0.$ Both the embeddings
$$ W_2^{{\bf R}}(\Bbb T^d) \to H^{q,2q}(\Bbb T^d)\ \ \ {\rm and}\ \ \ W_2^\infty(\Bbb T^d)\to W_2^{{\bf R}}(\Bbb T^d)$$ have norm 1.

We note  that the approximation problem
 $$I_d: H^{s,2s}(\Bbb T^d)\rightarrow L_2(\Bbb T^d)$$ is
$(\alpha, \beta)$-weakly tractable if $\alpha>0$ and $\beta>1$
(see \cite[Theorem 4.1]{SW}) or $\alpha>2$ and $\beta>0$ (see
\cite[Remark 7.6]{KMU}) for any $s>0$. This means that if
$\alpha>0$ and $\beta>1$  or $\alpha>2$ and $\beta>0$, the
approximation problems $$I_d\:: W_2^{\bf R}(\Bbb T^d)\rightarrow
L_2(\Bbb T^d)\ \ {\rm and}\ \ I_d\:: W_2^{\infty}(\Bbb
T^d)\rightarrow L_2(\Bbb T^d)$$ are  $(\alpha, \beta)$-weakly
 tractable.

On the other hand, it suffices to prove  the $(\alpha,
\beta)$-weak intractability of the approximation problems $I_d\::
W_2^{\infty}(\Bbb T^d)\rightarrow L_2(\Bbb T^d)$ for  $0<\alpha\le
2$ and $0<\beta\le1$.

 We note that
$$e(n,d)=a_{n+1}(I_d:\ W_2^{\infty}(\Bbb T^d)\rightarrow L_2(\Bbb
T^d)).$$This means that $e(3^d-1,d)=(1+d)^{-1/2}$ and $e(n,d)=0$
for $n\ge 3^d$. Choose
$$\vz=\vz_d=(2+d)^{-1/2}.$$  Then $$n(\vz_d,d)=\inf\{n\in\Bbb N\ |\ e(n,d)\le \vz_d\}=3^d.$$If $0<\alpha\le 2$ and $0<\beta\le1$,
then we have $$\lim_{1/\vz_d+d\to\infty}\frac{\ln
(n(\vz_d,d))}{(\vz_d)^{-\alpha}+d^\beta}\ge \lim_{d\to\infty}\frac
{d\ln 3}{d+2+d}=\frac{\ln3}2\neq0,$$ which implies that the
approximation problems $I_d\:: W_2^{\infty}(\Bbb T^d)\rightarrow
L_2(\Bbb T^d)$ is not $(\alpha, \beta)$-weakly tractable if
$0<\alpha\le 2$ and $0<\beta\le1$.

 Hence,   the
approximation problems $$I_d\:: W_2^{\bf R}(\Bbb T^d)\rightarrow
L_2(\Bbb T^d)\ \ {\rm and}\ \ I_d\:: W_2^{\infty}(\Bbb
T^d)\rightarrow L_2(\Bbb T^d)$$ are  $(\alpha, \beta)$-weakly
 tractable
 if and only if $\alpha>0$ and $\beta>1$
 or $\alpha>2$ and $\beta>0$.   The
proof of  Theorem \ref{t2.3} is finished. $\hfill\Box$

\end{document}